\documentclass[journal]{IEEEtran}

\usepackage{amssymb}
\usepackage[pdftex]{graphicx}
\usepackage{amsmath}
\usepackage{bm}
\usepackage{cite}
\usepackage{color}
\usepackage{algorithmic}
\usepackage{algorithm}
\usepackage{mathtools}
\usepackage{framed}
\usepackage{tikz}
\usetikzlibrary{arrows.meta}

\mathtoolsset{showonlyrefs=true}

\usepackage{amsfonts}
\usepackage{enumerate}
\usepackage{array}
\usepackage{tabularx}

\newtheorem{theorem}{Theorem}
\newtheorem{assumption}{Assumption}

\newtheorem{remark}{Remark}
\newtheorem{lemma}{Lemma}

\newtheorem{corollary}{Corollary}
\newtheorem{condition}{Condition}
\newtheorem{proposition}{Proposition}

\newtheorem{proof}{Proof}

\newcommand{\argmin}{\mathop{\rm argmin}\limits}

\DeclareMathOperator{\rank}{rank}
\DeclareMathOperator{\diag}{diag}
\DeclareMathOperator{\tr}{tr}
\DeclareMathOperator{\Ker}{Ker}
\DeclareMathOperator{\dist}{dist}

\DeclareMathOperator{\lin}{lin}

\newcommand{\e}{\mathrm{e}}
\newcommand{\D}{\mathrm{d}}

\newcommand{\Real}{\mathbb{R}}

\def\qed{\hfill $\Box$}

\begin{document}

\title{Projected-Gradient Analysis for Open-Domain Convex Optimization under Boundary Blow-Up:
Application to Controllability Scoring}

\author{Kazuhiro Sato\thanks{K. Sato is with the Department of Mathematical Informatics, Graduate School of Information Science and Technology, The University of Tokyo, Tokyo 113-8656, Japan, email: kazuhiro@mist.i.u-tokyo.ac.jp}}

\maketitle

\begin{abstract}
We study convex optimization over a compact convex set when the objective
is smooth and convex only on an open domain.
Under a boundary-blow-up condition, every feasible initialization yields a
compact invariant sublevel set separated from the complement of the
objective domain, and an optimal solution exists.
For a domain-aware Armijo projected-gradient method, a safe-neighborhood
analysis establishes well-defined objective evaluations, finite
backtracking, sufficient decrease, and a run-specific but
iteration-independent positive lower bound on the accepted step sizes.
These properties yield explicit sublinear objective and stationarity
guarantees, together with convergence of the full iterate sequence.
Positive curvature restricted to feasible displacement directions further
guarantees uniqueness and linear convergence.
We apply the framework to controllability scoring with prescribed input
directions and compact convex allocation constraints.
Feasibility is characterized exactly by controllability of the input
directions eligible for positive allocation, while restricted injectivity
of the Gramian map guarantees uniqueness of the optimal allocation and
provides explicit strong-convexity bounds.
A directed-network example illustrates how candidate exclusion can preserve
or destroy feasibility and alter the optimal allocation in a
criterion- and horizon-dependent manner.
\end{abstract}

\begin{IEEEkeywords}
Open-domain convex optimization, Armijo line search,
projected-gradient methods, controllability scoring.
\end{IEEEkeywords}

\IEEEpeerreviewmaketitle

\section{Introduction}
\label{sec:intro}

Projected-gradient and projection-arc Armijo methods are classical
first-order tools for smooth convex optimization over a constraint set
with an inexpensive Euclidean projection
\cite{bertsekas1976gradient,bertsekas2016nonlinear,
beck2017first,iusem2003projected}.
Their simple projection-based structure has also made projected-gradient
variants useful in control applications. Representative examples include
real-time and embedded model predictive control for linear systems with
input constraints
\cite{richter2012computational,patrinos2014accelerated},
nonlinear model predictive control based on projections onto linearized
constraints
\cite{torrisi2018projected},
and controllability maximization for large-scale network systems
\cite{sato2020controllability}.
A complication arises, however, when the objective is smooth only on an
open domain. Important matrix criteria, including the negative
log-determinant and inverse-trace criteria, are defined only for
positive-definite matrices. A trial step obtained by projecting onto a
separate explicit constraint set may therefore leave the objective domain
by reaching or crossing the boundary at which the underlying matrix becomes
singular.

The prior results most relevant to this open-domain difficulty provide
complementary ingredients.
Songsiri and Vandenberghe
\cite[Sec.~6.2]{songsiri2010topology}
introduced a domain-aware projection-arc method that rejects out-of-domain
trials before objective evaluation and assumes a closed and bounded initial
sublevel set, but did not derive an iteration-independent acceptance
threshold or the resulting rate and full-sequence convergence guarantees
for their displayed open-domain rules.
For objectives defined on the whole space, Iusem
\cite[Thm.~2]{iusem2003projected}
proved full-sequence convergence, while Aguiar et al.
\cite[Lemma~24 and Thms.~25--26]{aguiar2023inexact}
derived objective and stationarity bounds under a globally Lipschitz
continuous gradient.
In particular, the step-size argument in
\cite[Lemma~24]{aguiar2023inexact}
uses the reverse Armijo inequality at the preceding rejected trial.
In the present setting, that trial may instead lie outside the objective
domain, where the objective value and hence the reverse Armijo inequality
are unavailable.
Zhang and Hong
\cite{zhang2024firstorder}
developed a compact-set localization framework for locally smooth
objectives, but assumed that the objective remains defined on the regions
used by the algorithm and therefore did not address domain validity of
projected trials.
We connect these ingredients through a run-specific safe-neighborhood
analysis that guarantees both domain membership and Armijo acceptance for
all sufficiently small projected trials.

The open-domain algorithmic analysis developed in this paper provides the
analytical foundation for a generalization of the controllability-scoring
problems studied in
\cite{sato2024scores,sato2025uniqueness}.
Specifically, we extend the admissible actuation model from node-wise inputs
to arbitrary prescribed input directions and replace the full allocation
simplex by a compact convex allocation set.
This generalized setting also brings into focus a step-size property that
was left unresolved in the existing convergence analysis.
In the conventional node-wise setting,
\cite{sato2024scores} introduced an Armijo projected-gradient algorithm for
computing the volumetric controllability score (VCS) and average-energy
controllability score (AECS) and established convergence of the generated
sequence.
Reference~\cite[Thm.~6]{sato2025uniqueness} subsequently established linear
convergence for the finite-horizon scoring problems, provided that the
accepted step sizes are uniformly bounded away from zero.
However, this property is not derived from the Armijo line search in
\cite{sato2024scores} and is imposed as an assumption in
\cite[Thm.~6]{sato2025uniqueness}.
The present open-domain analysis derives it from the problem structure and
the domain-aware line-search mechanism.

The main contributions of this paper are as follows.
\begin{enumerate}
\item
We develop a quantitative safe-neighborhood analysis for the domain-aware
projection arc.
For every feasible initialization, the boundary-blow-up condition yields a
compact initial sublevel set separated from the complement of the objective
domain and hence a compact neighborhood on which the gradient is bounded
and Lipschitz continuous.
We show that every sufficiently small projected trial generated from this
sublevel set is both inside the objective domain and acceptable under the
Armijo condition.
This yields valid objective evaluations, well-defined and finitely
terminating backtracking, and an explicit run-specific but
iteration-independent positive lower bound on the accepted step sizes,
without invoking a reverse Armijo inequality at a potentially
out-of-domain rejected trial.

\item
Using this iteration-independent positive lower bound on the accepted step sizes, we
establish convergence and quantitative rate guarantees without assuming
that the gradient is globally Lipschitz continuous.
These include convergence of the full iterate sequence, an
$\mathcal O(1/K)$ objective-residual bound, and an
$\mathcal O(K^{-1/2})$ bound on the smallest stationarity residual over the
first $K+1$ outer iterations.
Under an additional curvature condition imposed only along directions
generated by differences between feasible points, we further obtain
uniqueness and an explicit linear-convergence estimate.

\item
We generalize the finite-horizon VCS and AECS problems from
node-wise actuation to an arbitrary finite family of prescribed input
directions and compact convex allocation sets.
Feasibility is characterized exactly by controllability of the input
directions eligible for positive allocation, while restricted injectivity
of the Gramian map guarantees uniqueness of the VCS and AECS and provides explicit strong-convexity bounds.
 For candidate-exclusion constraints, we further derive a
lower-dimensional simplex reformulation and an explicit projection formula,
characterize restricted injectivity through the full column rank and
minimum singular value of a reduced Gramian matrix, and obtain explicit
expressions for the quantities entering the strong-convexity bounds.
A directed-network example illustrates how candidate exclusion can preserve
or destroy feasibility and alter the optimal allocations in a
criterion- and horizon-dependent manner.
In the example, exclusion produces larger $\ell_1$ allocation
changes for AECS than for VCS at all considered horizons.
\end{enumerate}

The remainder of this paper is as follows.
Section~\ref{sec:general_problem} develops the general open-domain theory.
Section~\ref{sec:generalized_controllability_scoring} gives the detailed
controllability-scoring application. Section~\ref{sec:numerical_experiment}
presents  a directed-network numerical study, and
Section~\ref{sec:conclusion} concludes the paper.

{\it Notation:}
The sets of real and nonnegative real numbers are denoted by $\Real$ and
$\Real_+:=[0,\infty)$, respectively. All vectors are column vectors, and
$(\cdot)^\top$ denotes transpose. The symbols $I_n$, $O$, and $\bm 1_n$
denote the $n\times n$ identity matrix, a zero matrix of the appropriate
size, and the all-ones vector in $\Real^n$, respectively; subscripts are
omitted when their dimensions are clear.

For $x\in\Real^n$, $\|x\|$ denotes the Euclidean norm and $\|x\|_1$
denotes the $\ell_1$-norm. For a matrix $X$, $\|X\|_2$ and
$\|X\|_{\rm F}$ denote the spectral and Frobenius norms, respectively.
The trace, determinant, diagonal-matrix operator, and rank are denoted by
$\tr$, $\det$, $\diag$, and $\rank$, respectively. For a symmetric matrix
$X$, $\lambda_{\min}(X)$ and $\lambda_{\max}(X)$ denote its smallest and
largest eigenvalues, while $\sigma_{\min}(X)$ denotes the smallest singular
value of a matrix.

The space of real symmetric $n\times n$ matrices is denoted by
$\mathbb S^n$. For $X\in\mathbb S^n$, $X\succeq O$ and $X\succ O$ mean
that $X$ is positive semidefinite and positive definite, respectively. For
$X,Y\in\mathbb S^n$, we write $X\succeq Y$ if $X-Y\succeq O$. For
$X\succ O$, $X^{1/2}$ denotes its symmetric positive-definite square root,
and $X^{-1/2}:=(X^{1/2})^{-1}$.

For a nonempty subset $C$ of a finite-dimensional Euclidean space, $\lin C$ and $\operatorname{ri}C$ denote its linear span
and relative interior, respectively, and
$C-C:=\{p-q\mid p,q\in C\}$.
The
complement of a set $A$ in its ambient Euclidean space is denoted by
$A^c$. For a finite set $S$, $|S|$ denotes its cardinality. For a linear
map $\Phi$, $\Ker\Phi$ denotes its kernel.

For a nonempty closed convex set $C\subset\Real^n$, $P_C$ denotes the
Euclidean projection onto $C$. For two nonempty sets
$A,B\subset\Real^n$, define
\begin{align}
\dist(A,B)
:=
\inf\left\{
\|a-b\|
\ \middle|\
a\in A,\ b\in B
\right\}.
\end{align}
For $a\in\Real^n$, we abbreviate $\dist(\{a\},B)$ as $\dist(a,B)$.

For a differentiable function $f$, $\nabla f(p)$ denotes its gradient and
${\rm D} f(p)[v]$ denotes its directional derivative at $p$ along $v$. If $f$
is twice differentiable, $\nabla^2f(p)$ denotes its Hessian. Landau
asymptotic notation is written as $\mathcal O(\cdot)$ to distinguish it
from the zero matrix $O$.

\section{Open-domain analysis under boundary blow-up}
\label{sec:general_problem}

We consider the open-domain constrained optimization problem
\begin{framed}
\vspace{-1em}
\begin{align}
\label{prob:general}
    \begin{aligned}
        &&& \text{minimize} && f(p) \\
        &&& \text{subject to} && p\in C\cap U.
    \end{aligned}
\end{align}
\vspace{-1em}
\end{framed}
\noindent
Here, $C\subset\Real^n$ is the explicit constraint set used for projection,
$U\subset\Real^n$ is the open domain of the objective, and
$f:U\to\Real$ is the objective function. 
Projection will be performed onto $C$, while membership in $U$
will be checked separately; no projection onto $C\cap U$ will be required.

Before introducing the projected point and the gradient mapping, we state
the standing assumptions used throughout this section. The first assumption
specifies the geometry of the explicit constraint set and the open objective
domain. Compactness and convexity of $C$ ensure that the Euclidean projection
onto $C$ is well defined and provide the boundedness needed later for
attainment and localization, while convexity of $C$ and $U$ makes the
feasible set $C\cap U$ convex. The second assumption supplies the
differential and convexity properties required by the projected-gradient
and curvature analyses.

\begin{assumption}
\label{ass:set}
The set $C\subset\Real^n$ is nonempty, compact, and convex, the set
$U\subset\Real^n$ is open and convex, and $C\cap U\neq\emptyset$.
\end{assumption}

\begin{assumption}
\label{ass:objective_regularity}
The function $f:U\to\Real$ is twice continuously differentiable and convex.
\end{assumption}

\subsection{Relation to existing projected-gradient results} \label{subsec2-A}

\begin{table*}[t]
\centering
\caption{Comparison with closely related projected-gradient analyses.}
\label{tab:related_open_domain_results}
{\footnotesize
\setlength{\tabcolsep}{3.0pt}
\renewcommand{\arraystretch}{1.12}

\begin{tabularx}{\linewidth}{%
@{}
p{2.95cm}
>{\centering\arraybackslash}p{2.35cm}
>{\centering\arraybackslash}p{2.35cm}
X
@{}}
\multicolumn{4}{c}{%
\textnormal{(a) Open-domain treatment and localization premises}
}\\
\hline
Reference
&
\shortstack{Objective not defined\\on all of $\Real^n$}
&
\shortstack{Domain test before\\objective evaluation}
&
Localization or smoothness premise
\\
\hline

Songsiri and Vandenberghe
\cite[Sec.~6.2]{songsiri2010topology}
&
Yes
&
Yes
&
Closed and bounded initial sublevel set for a prescribed feasible
initialization.
\\

Iusem
\cite[Thm.~2]{iusem2003projected}
&
No
&
No
&
Objective defined on $\Real^n$; existence of an optimal solution.
\\

Aguiar et al.~
\cite[Lemma~24 and Thms.~25--26]{aguiar2023inexact}
&
No
&
No
&
Objective defined on $\Real^n$; globally Lipschitz continuous gradient
for the stated rate results.
\\

Zhang and Hong
\cite{zhang2024firstorder}
&
No
&
No
&
Objective defined on $\Real^n$; local Lipschitz continuity on compact
sets and sequentially constructed local regions.
\\

Present work
&
Yes
&
Yes
&
Boundary blow-up, yielding a compact and boundary-separated initial
sublevel set for every feasible initialization.
\\
\hline
\end{tabularx}

\vspace{1.2ex}

\begin{tabular}{@{}p{3.10cm}p{2.75cm}p{5.45cm}p{2.65cm}p{2.65cm}@{}}
\multicolumn{5}{c}{%
\textnormal{(b) Step-size, rate, and convergence guarantees}
}\\
\hline
Reference
&
\shortstack{Iteration-independent\\positive step-size bound}
&
Nonasymptotic guarantee
&
\shortstack{Full-sequence\\convergence}
&
\shortstack{Linear\\convergence}
\\
\hline

Songsiri and Vandenberghe
\cite[Sec.~6.2]{songsiri2010topology}
&
Not stated
&
Not stated
&
Not stated
&
Not stated
\\

Iusem
\cite[Thm.~2]{iusem2003projected}
&
Not stated
&
Not stated
&
Yes
&
Not stated
\\

Aguiar et al.~
\cite[Lemma~24 and Thms.~25--26]{aguiar2023inexact}
&
Yes
&
$\mathcal O(K^{-1/2})$ stationarity and
$\mathcal O(K^{-1})$ objective-residual bounds
&
Yes, under their assumptions
&
Not stated
\\

Zhang and Hong
\cite{zhang2024firstorder}
&
Not stated
&
Gradient-oracle complexity in terms of local smoothness growth
&
Not stated
&
Not stated
\\

Present work
&
Yes
&
$\mathcal O(K^{-1/2})$ stationarity and
$\mathcal O(K^{-1})$ objective-residual bounds
&
Yes
&
Yes, under feasible-direction curvature
\\
\hline
\end{tabular}
}
\end{table*}

Under Assumptions~\ref{ass:set} and
\ref{ass:objective_regularity}, the projected point and gradient mapping
below are well defined. To fix notation, for
$p\in C\cap U$ and $\alpha>0$, let $q_\alpha(p):=P_C(p-\alpha\nabla f(p))$ and
\begin{align}
\mathcal G_\alpha(p):=\alpha^{-1}(p-q_\alpha(p)).
\label{eq:projected_gradient_mapping}
\end{align}
Classical treatments of projected-gradient methods can be found in
\cite{bertsekas1976gradient},
\cite[Sec.~3.3]{bertsekas2016nonlinear}, and
\cite[Chap.~10]{beck2017first}.
The nonexpansiveness of the Euclidean projection
\cite[Thm.~6.42(b)]{beck2017first}, together with $P_C(p)=p$, gives
\begin{align}
\|q_\alpha(p)-p\|
&\leq
\alpha\|\nabla f(p)\|.
\label{eq:cited_trial_displacement}
\end{align}
Moreover, the variational characterization of the Euclidean projection
\cite[Thm.~6.41]{beck2017first}, applied with $p\in C$, yields
\begin{align}
\nabla f(p)^\top(q_\alpha(p)-p)
&\leq
-\alpha^{-1}\|q_\alpha(p)-p\|^2.
\label{eq:cited_projection_descent}
\end{align}
The projection characterization and the convexity of $f$ show that
$\mathcal G_\alpha(p)=0$ implies that $p$ solves
problem~\eqref{prob:general}.
The converse implication is standard when the objective is defined on the
entire constraint set; see
\cite[Cor.~10.8]{beck2017first}.
In the present setting, although $f$ is defined only on the open set $U$,
the openness of $U$ and the convexity of $C$ ensure that the converse
implication remains valid. Consequently,
\begin{align}
\mathcal G_\alpha(p)=0
\quad\Leftrightarrow\quad
p\ \text{is an optimal solution to \eqref{prob:general}}.
\label{eq:cited_gradient_mapping_optimality}
\end{align}

Table~\ref{tab:related_open_domain_results} compares the present analysis
with the projected-gradient results most directly relevant to it.
Panel~(a) summarizes their treatment of the objective domain, domain testing,
and localization or smoothness assumptions, while panel~(b) compares the
step-size, rate, and convergence guarantees explicitly established under
the respective assumptions.
In particular, the table distinguishes the run-specific sublevel-set
localization assumed for a prescribed initialization in
\cite[Sec.~6.2]{songsiri2010topology} from the problem-level
boundary-blow-up condition used here to obtain such localization for every
feasible initialization.
A ``Yes'' entry indicates that the corresponding property is explicitly
established for the method and assumptions treated in the cited reference;
``Not stated'' does not imply that the property is false.

\subsection{Boundary blow-up and initial-sublevel localization}
\label{subsec2-B}

For a fixed feasible initialization, the line-search analysis uses only
the geometry of the initial sublevel set associated with that run.
Fix $p^{(0)}\in C\cap U$ and define
\begin{align}
\mathcal F^{(0)}
:=
\left\{
p\in C\cap U
\ \middle|\
f(p)\leq f(p^{(0)})
\right\}.
\label{eq:initial_sublevel_compact}
\end{align}
More precisely, the analysis below requires
$\mathcal F^{(0)}$ to be compact and to remain a positive distance from
the complement of the objective domain. Rather than imposing these
properties separately for the chosen initial point, we derive them from
the following problem-level condition.

\begin{assumption}[Boundary blow-up]
\label{ass:blowup}
For $\alpha\in\Real$, define
\begin{align}
L_\alpha
:=
\left\{
p\in C\cap U
\ \middle|\
f(p)\leq\alpha
\right\}.
\end{align}
Whenever $L_\alpha$ is nonempty,
$\dist(L_\alpha,U^c)>0$.
\end{assumption}

The term boundary blow-up reflects the following equivalent sequential
form: for every sequence
$\{p^{(j)}\}\subset C\cap U$,
\begin{align}
\dist(p^{(j)},U^c)\to0
\quad\Rightarrow\quad
f(p^{(j)})\to+\infty.
\label{eq:sequential_boundary_blowup}
\end{align}
Indeed, a sequence approaching the complement of the objective domain
cannot remain in a fixed sublevel set. Thus, a sequence with uniformly bounded objective values cannot approach the complement of the objective domain.

Applying Assumption~\ref{ass:blowup} with
$\alpha=f(p^{(0)})$ gives the boundary separation required for the
initial sublevel set. Together with the compactness of $C$ and the
continuity and convexity of $f$, this also yields the remaining
geometric properties used below.

\begin{lemma}
\label{lem:sublevel_geometry}
Under Assumptions~\ref{ass:set}--\ref{ass:blowup},
the initial sublevel set $\mathcal F^{(0)}$ is nonempty, compact, and
convex, and
\begin{align}
d
:=
\dist\left(\mathcal F^{(0)},U^c\right)
>0.
\label{eq:sublevel_boundary_distance}
\end{align}
\end{lemma}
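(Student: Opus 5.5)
Nonemptiness is immediate: $p^{(0)}\in C\cap U$ and $f(p^{(0)})\le f(p^{(0)})$, so $p^{(0)}\in\mathcal F^{(0)}$. Moreover, $\mathcal F^{(0)}=L_{\alpha}$ with $\alpha=f(p^{(0)})$. Since this set is nonempty, Assumption~\ref{ass:blowup} directly gives $d=\dist(\mathcal F^{(0)},U^c)>0$. The case $U^c=\emptyset$, i.e.\ $U=\Real^n$, needs a convention: either the distance to the empty set is $+\infty$, or that case is excluded implicitly. I will record this convention in a brief remark, since it also matters for the compactness argument below.

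Convexity follows from standard facts. By Assumption~\ref{ass:set}, $C\cap U$ is convex as the intersection of two convex sets. By Assumption~\ref{ass:objective_regularity}, $f$ is convex on $U$. Take $p,q\in\mathcal F^{(0)}$ and $t\in[0,1]$, and set $r=(1-t)p+tq$. Then $r\in C\cap U$, and convexity gives $f(r)\le (1-t)f(p)+tf(q)\le f(p^{(0)})$, so $r\in\mathcal F^{(0)}$.

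Compactness is the only step needing care, because $\mathcal F^{(0)}$ is relatively closed only in the open set $U$. It is bounded since it lies in the compact set $C$, so it suffices to show that it is closed in $\Real^n$. Take a sequence $\{p^{(j)}\}\subset\mathcal F^{(0)}$ converging to some $\bar p\in\Real^n$. Since $C$ is closed, $\bar p\in C$. For $U$-membership we use the separation just established. For every $y\in U^c$ and every $j$ we have $\|p^{(j)}-y\|\ge d$, and letting $j\to\infty$ gives $\|\bar p-y\|\ge d>0$. Hence $\bar p\notin U^c$, that is, $\bar p\in U$. (If $U^c=\emptyset$, then trivially $\bar p\in U$.) Because $f$ is continuous on $U$ (Assumption~\ref{ass:objective_regularity}) and $\bar p\in U$, we get $f(\bar p)=\lim_j f(p^{(j)})\le f(p^{(0)})$. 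Thus $\bar p\in\mathcal F^{(0)}$, so the set is closed, and being bounded, it is compact.

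The main obstacle is exactly this limit-point argument. Without the boundary separation, a limit of sublevel points could escape to $\partial U$, where $f$ is undefined and the sublevel set would fail to be closed. Using $d>0$ to keep limits inside $U$ resolves it, so the order matters: separation first, then compactness.
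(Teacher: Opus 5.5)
Your proof is correct and follows essentially the same route as the paper's proof. In both, separation comes directly from Assumption~\ref{ass:blowup} with $\alpha=f(p^{(0)})$, and closedness comes from a limit-point argument in which $d>0$ keeps the limit inside $U$ before continuity of $f$ is applied; convexity and the $U^c=\emptyset$ convention are handled the same way in both.
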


\begin{proof}
See Appendix~\ref{app:sublevel_geometry_and_attainment}.
\qed
\end{proof}

\begin{corollary}
\label{thm:existence}
Under Assumptions~\ref{ass:set}--\ref{ass:blowup},
problem~\eqref{prob:general} has an optimal solution.
Moreover, if $f$ is strictly convex on
$\mathcal F^{(0)}$, then the optimal solution is unique.
\end{corollary}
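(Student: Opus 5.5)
The plan is to reduce the minimization over $C\cap U$ to a minimization over the initial sublevel set $\mathcal F^{(0)}$ and then apply the Weierstrass theorem. By Lemma~\ref{lem:sublevel_geometry}, $\mathcal F^{(0)}$ is nonempty and compact. The function $f$ is continuous on $U$ by Assumption~\ref{ass:objective_regularity}, and $\mathcal F^{(0)}\subset U$, so $f$ is continuous on $\mathcal F^{(0)}$. Hence $f$ attains its minimum on $\mathcal F^{(0)}$ at some point $p^\star$.

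Next I will show that $p^\star$ is optimal for problem~\eqref{prob:general}. Take any $p\in C\cap U$. If $f(p)\le f(p^{(0)})$, then $p\in\mathcal F^{(0)}$, so $f(p^\star)\le f(p)$ by the choice of $p^\star$. Otherwise $f(p)>f(p^{(0)})\ge f(p^\star)$, using $p^{(0)}\in\mathcal F^{(0)}$. In both cases $f(p^\star)\le f(p)$, so $p^\star$ solves \eqref{prob:general}.

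For uniqueness, suppose $f$ is strictly convex on $\mathcal F^{(0)}$ and let $p_1,p_2$ be two optimal solutions with common value $f^\star$. Since $f^\star\le f(p^{(0)})$, both points lie in $\mathcal F^{(0)}$. By convexity of $\mathcal F^{(0)}$ (Lemma~\ref{lem:sublevel_geometry}), the midpoint $\bar p=(p_1+p_2)/2$ also lies in $\mathcal F^{(0)}\subset C\cap U$. If $p_1\ne p_2$, strict convexity on $\mathcal F^{(0)}$ gives
\begin{align}
f(\bar p)<\tfrac12 f(p_1)+\tfrac12 f(p_2)=f^\star,
\end{align}
which contradicts optimality. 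Hence $p_1=p_2$.

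There is no serious obstacle here. The only delicate point is that continuity of $f$ is available only on the open set $U$, so compactness of $C\cap U$ is not available and could not be used directly. This is precisely why the argument is routed through the compact set $\mathcal F^{(0)}\subset U$ supplied by Lemma~\ref{lem:sublevel_geometry}, which in turn relies on the boundary blow-up Assumption~\ref{ass:blowup}.
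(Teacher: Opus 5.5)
Your proposal is correct and follows essentially the same route as the paper's proof: Weierstrass on the compact set $\mathcal F^{(0)}$ from Lemma~\ref{lem:sublevel_geometry}, followed by the observation that points of $(C\cap U)\setminus\mathcal F^{(0)}$ have $f(p)>f(p^{(0)})\geq f(p^\star)$. For uniqueness, the paper simply cites a standard result on strictly convex functions over convex sets, whereas you spell out the midpoint argument and make explicit that every optimal solution must lie in $\mathcal F^{(0)}$.
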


\begin{proof}
See Appendix~\ref{app:sublevel_geometry_and_attainment}.
\qed
\end{proof}

\begin{remark}
Assumption~\ref{ass:blowup} is imposed on the original objective and its
open domain relative to the explicit constraint set, rather than through
an auxiliary barrier introduced to encode $U$.
The condition holds automatically when $U=\Real^n$, under the convention
$\dist(A,\emptyset)=+\infty$.
\end{remark}

\subsection{Domain-aware projected gradient and line search} \label{subsec2-C}

Algorithm~\ref{alg:PG_general} applies projected-gradient steps to the
explicit constraint set $C$ while separately enforcing membership in the
open objective domain $U$. At outer iteration $k$, the trial step size is
initialized at $\bar\alpha$. The inner \texttt{while} loop, referred to
below as the backtracking loop, repeatedly forms the projected trial point
$q
:=
P_C\left(
p^{(k)}-\alpha\nabla f(p^{(k)})
\right)$.
Because the objective need not be defined outside $U$, the algorithm first
tests whether $q\in U$. Only after this domain test succeeds is $f(q)$
evaluated and the Armijo acceptance condition
\begin{align}
f(q)
\leq
f(p^{(k)})
+
\sigma\nabla f(p^{(k)})^\top
\left(q-p^{(k)}\right)
\label{eq:armijo_acceptance_condition}
\end{align}
checked. If the domain test fails, or if the Armijo condition fails after
the domain test succeeds, the trial is rejected and the step size is
reduced from $\alpha$ to $\rho\alpha$. Once both conditions are satisfied,
the trial point and step size are accepted by setting
$\tilde p^{(k)}:=q$ and $\alpha_k:=\alpha$, and the backtracking loop
terminates.

After acceptance, the algorithm tests stationarity before updating the
iterate. From \eqref{eq:projected_gradient_mapping},
the stopping condition is precisely
$\|\mathcal G_{\alpha_k}(p^{(k)})\|\leq\varepsilon$. Otherwise,
$\tilde p^{(k)}$ becomes the next iterate. The projection is taken onto $C$,
rather than $C\cap U$, because the latter need not be closed and its
Euclidean projection need not be well-defined. This separation between projection and domain membership is the
domain-aware mechanism described in
Subsection~\ref{subsec2-A}.

\begin{figure}[t]
\begin{algorithm}[H]
\caption{Domain-aware Armijo projected gradient method}
\label{alg:PG_general}
\textbf{Input:} $p^{(0)}\in C\cap U$, $\bar\alpha>0$,
$\sigma,\rho\in(0,1)$, and $\varepsilon\geq0$.
\begin{algorithmic}[1]
\FOR{$k=0,1,\ldots$}
    \STATE Initialize the backtracking loop with
    $\alpha:=\bar\alpha$.
    \WHILE{true} 
        \STATE Set
        $q:=P_C(p^{(k)}-\alpha\nabla f(p^{(k)}))$.
        \IF{$q\in U$}
            \IF{$f(q)\leq f(p^{(k)})
            +\sigma\nabla f(p^{(k)})^\top(q-p^{(k)})$}
                \STATE Set $\alpha_k:=\alpha$ and
                $\tilde p^{(k)}:=q$; exit the loop.
            \ENDIF
        \ENDIF
        \STATE Set $\alpha\leftarrow\rho\alpha$.
    \ENDWHILE
    \IF{$\|\tilde p^{(k)}-p^{(k)}\|/\alpha_k\leq\varepsilon$}
        \RETURN $p^{(k)}$.
    \ENDIF
    \STATE Set $p^{(k+1)}:=\tilde p^{(k)}$.
\ENDFOR
\end{algorithmic}
\end{algorithm}
\end{figure}

To establish finite backtracking uniformly over all outer iterations, we
introduce a common neighborhood on which domain membership and local
smoothness can be controlled. Define
\begin{align}
\mathcal K_r
:=
\left\{
q\in\Real^n
\ \middle|\
\dist(q,\mathcal F^{(0)})\leq r
\right\},
\qquad
r
:=
\min\left\{1,\frac{d}{2}\right\},
\label{eq:safe_neighborhood}
\end{align}
where $d$ is defined in Lemma~\ref{lem:sublevel_geometry}.
The bound $r\leq1$ is imposed only to keep $r$ finite in the whole-space
case $U=\Real^n$.
Then $\mathcal K_r$ is compact, convex, and contained in $U$, and
$\nabla f$ admits a Lipschitz constant on $\mathcal K_r$.
The proof of the next theorem shows that every sufficiently small
projected trial generated from $\mathcal F^{(0)}$, together with the
segment joining it to the current point, lies in $\mathcal K_r$.
The descent lemma~\cite[Lem.~5.7]{beck2017first} and the projection inequality
\eqref{eq:cited_projection_descent} then imply the Armijo condition
\eqref{eq:armijo_acceptance_condition}. Since
$\bar\alpha\rho^j\to0$ as $j\to\infty$, this yields finite backtracking
and an iteration-independent positive lower bound on the accepted step
sizes.

\begin{theorem}
\label{thm:well_defined}
Under Assumptions~\ref{ass:set}--\ref{ass:blowup}, every backtracking loop in
Algorithm~\ref{alg:PG_general} terminates after finitely many step-size
reductions.
Every accepted trial point $\tilde p^{(k)}$ belongs to
$\mathcal F^{(0)}$, and hence every generated iterate remains in
$\mathcal F^{(0)}$. The accepted step sizes satisfy
\begin{align}
\underline\alpha
\leq
\alpha_k
\leq
\bar\alpha,
\qquad
\underline\alpha
:=
\min\{\bar\alpha,\rho\alpha_\star\}
>0,
\label{eq:uniform_stepsize_lower_bound}
\end{align}
where
\begin{align}
\alpha_\star
&:=
\min\left\{
\frac{r}{G},
\frac{2(1-\sigma)}{L}
\right\},
\notag\\
G
&:=
\max\left\{
1,\max_{p\in\mathcal F^{(0)}}\|\nabla f(p)\|
\right\},
\notag\\
L
&:=
\max\left\{
1,\max_{q\in\mathcal K_r}\|\nabla^2 f(q)\|_2
\right\}.
\label{eq:uniform_acceptance_constants}
\end{align}
Here, $\mathcal K_r$ is defined in
\eqref{eq:safe_neighborhood}. Moreover, every accepted trial point satisfies
\begin{align}
f(\tilde p^{(k)})
\leq
f(p^{(k)})
-\sigma\alpha_k
\|\mathcal G_{\alpha_k}(p^{(k)})\|^2.
\label{eq:sufficient_decrease_mapping}
\end{align}
\end{theorem}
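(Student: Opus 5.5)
The plan is to argue by induction on $k$, with the inductive hypothesis $p^{(k)}\in\mathcal F^{(0)}$. This holds trivially for $k=0$.

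First I record the geometric facts about $\mathcal K_r$. It is closed and bounded, since $\mathcal F^{(0)}$ is compact by Lemma~\ref{lem:sublevel_geometry}. It is convex, because $q\mapsto\dist(q,\mathcal F^{(0)})$ is a convex function when $\mathcal F^{(0)}$ is convex. It is contained in $U$: if $q\in\mathcal K_r$, then some $p\in\mathcal F^{(0)}$ satisfies $\|q-p\|\le r\le d/2<d$, while every point of $U^c$ lies at distance at least $d$ from $p$. Consequently $\nabla^2 f$ is continuous on the compact set $\mathcal K_r\subset U$, so $L$ is finite, and $G$ is finite by compactness of $\mathcal F^{(0)}$. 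By the mean-value theorem along segments in the convex set $\mathcal K_r$, $\nabla f$ is $L$-Lipschitz there.

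The core step fixes $p:=p^{(k)}\in\mathcal F^{(0)}$ and $\alpha\in(0,\alpha_\star]$, and sets $q=q_\alpha(p)$. By \eqref{eq:cited_trial_displacement}, $\|q-p\|\le\alpha\|\nabla f(p)\|\le\alpha G\le r$. Hence the whole segment $[p,q]$ lies in the closed ball of radius $r$ about $p$, which is contained in $\mathcal K_r\subset U$. In particular $q\in U$, so the domain test passes. The descent lemma \cite[Lem.~5.7]{beck2017first} on $\mathcal K_r$ gives
\begin{align}
f(q)\le f(p)+\nabla f(p)^\top(q-p)+\tfrac L2\|q-p\|^2 .
\end{align}
To reach the Armijo condition \eqref{eq:armijo_acceptance_condition}, it suffices to have $(1-\sigma)\nabla f(p)^\top(q-p)+\tfrac L2\|q-p\|^2\le0$. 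By \eqref{eq:cited_projection_descent}, the left-hand side is at most $\|q-p\|^2\bigl(\tfrac L2-\tfrac{1-\sigma}{\alpha}\bigr)$, which is nonpositive because $\alpha\le 2(1-\sigma)/L$. So every trial with $\alpha\le\alpha_\star$ is accepted.

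Since the trial sizes are $\bar\alpha\rho^j$, backtracking stops no later than the first $j$ with $\bar\alpha\rho^j\le\alpha_\star$, so it terminates finitely. There are two cases for the accepted step. If $\bar\alpha\le\alpha_\star$, then $\alpha_k=\bar\alpha$. Otherwise the previous trial size $\alpha_k/\rho$ exceeded $\alpha_\star$, because every size at most $\alpha_\star$ is accepted. In that case $\alpha_k>\rho\alpha_\star$, which gives \eqref{eq:uniform_stepsize_lower_bound}.

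For the accepted point, I combine the Armijo condition with \eqref{eq:cited_projection_descent} and the identity $\|\tilde p^{(k)}-p^{(k)}\|^2=\alpha_k^2\|\mathcal G_{\alpha_k}(p^{(k)})\|^2$. This yields
\begin{align}
f(\tilde p^{(k)})\le f(p^{(k)})-\sigma\alpha_k^{-1}\|\tilde p^{(k)}-p^{(k)}\|^2=f(p^{(k)})-\sigma\alpha_k\|\mathcal G_{\alpha_k}(p^{(k)})\|^2,
\end{align}
which is \eqref{eq:sufficient_decrease_mapping}. Since $\tilde p^{(k)}\in C\cap U$ and $f(\tilde p^{(k)})\le f(p^{(k)})\le f(p^{(0)})$, we get $\tilde p^{(k)}\in\mathcal F^{(0)}$, and the induction closes.

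The main obstacle is guaranteeing that the segment $[p,q]$ stays inside a region where both $f$ is defined and its curvature is uniformly controlled, without knowing anything about rejected trials. The choice $\alpha\le r/G$, together with the bound $d$ from Lemma~\ref{lem:sublevel_geometry}, handles this. Convexity of $\mathcal K_r$ (or simply of the ball around $p$) is what lets the descent lemma be applied.
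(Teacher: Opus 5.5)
Your proposal is correct and follows essentially the same argument as the paper: $\mathcal K_r\subset U$, the trial bound $\|q_\alpha(p)-p\|\le\alpha G\le r$, the descent lemma combined with \eqref{eq:cited_projection_descent} to get the Armijo condition for all $\alpha\le\alpha_\star$, and sufficient decrease plus induction to show invariance of $\mathcal F^{(0)}$. The one imprecision is your step-size case split: when $\bar\alpha>\alpha_\star$ but the first trial is accepted, there is no previous trial $\alpha_k/\rho$, although then $\alpha_k=\bar\alpha\ge\underline\alpha$ trivially; the paper avoids this subcase by noting that the loop stops no later than the first $j_\star$ with $\bar\alpha\rho^{j_\star}\le\alpha_\star$, so $\alpha_k\ge\bar\alpha\rho^{j_\star}\ge\underline\alpha$.
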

\begin{proof}
See Appendix~\ref{app:uniform_open_domain_safeguards}.
\qed
\end{proof}

The sets $\mathcal F^{(0)}$ and $\mathcal K_r$ serve complementary
purposes. In fact, $\mathcal F^{(0)}$ is invariant for accepted trial points and generated
iterates, whereas $\mathcal K_r$ controls projected trials before acceptance.
Because both sets are fixed for the entire run, the constants
$G$, $L$, and $\alpha_\star$ are independent of the outer iteration.
Accordingly, the uniform step-size bound
\eqref{eq:uniform_stepsize_lower_bound} and the sufficient-decrease estimate
\eqref{eq:sufficient_decrease_mapping} provide the two ingredients used in
the subsequent convergence analysis.

\begin{remark}
Following the standard extended-value convention
\cite[App.~A.3.3]{boyd2004convex}, define
\begin{align}
\bar f(p)
:=
\begin{cases}
f(p), & p\in C\cap U,\\
+\infty, & \text{otherwise}.
\end{cases}
\label{eq:extended_value_objective}
\end{align}
Under the compactness of $C$ and the continuity of $f$ on $U$,
Assumption~\ref{ass:blowup} is equivalent to closedness of $\bar f$.
This reformulation is useful for expressing the exclusion of
out-of-domain points and for establishing attainment, but it does not by
itself recover the standard Armijo analysis. Indeed, $\bar f$ is not a
finite-valued differentiable function outside $U$, and the segment joining
$p^{(k)}$ to an out-of-domain trial may contain points at which
$\nabla f$ is undefined. Moreover, representing membership in $U$ by an
indicator function would lead to a proximal subproblem over $C\cap U$,
whereas Algorithm~\ref{alg:PG_general} projects onto $C$ and then tests
membership in $U$ separately.
\end{remark}

\subsection{Localized rates and full-sequence convergence} \label{subsec2-D}

The next theorem derives objective and stationarity estimates using the
sufficient-decrease inequality
\eqref{eq:sufficient_decrease_mapping} and the uniform positive lower bound
on the accepted step sizes in
\eqref{eq:uniform_stepsize_lower_bound}. The latter is obtained from the
compact safe neighborhood
\eqref{eq:safe_neighborhood}, on which the gradient is bounded and
Lipschitz continuous.

\begin{theorem}
\label{thm:global_convergence}
Suppose that Assumptions~\ref{ass:set}--\ref{ass:blowup} hold and that
Algorithm~\ref{alg:PG_general} is run with $\varepsilon=0$. Let
$p^\star$ be any optimal solution and set $f^\star:=f(p^\star)$. Then, for
every integer $K\geq0$ such that
$p^{(0)},\ldots,p^{(K+1)}$ are generated,
\begin{align}
\min_{k=0,\ldots,K}
\|\mathcal G_{\alpha_k}(p^{(k)})\|
\leq
\sqrt{
\frac{f(p^{(0)})-f^\star}
{\sigma\underline\alpha(K+1)}
}.
\label{eq:global_stationarity_complexity}
\end{align}
Moreover,
\begin{align}
f(p^{(K)})-f^\star
\leq
\frac{
\|p^{(0)}-p^\star\|^2
+2\bar\alpha\bigl(f(p^{(0)})-f^\star\bigr)/\sigma
}{
2\underline\alpha(K+1)
}.
\label{eq:global_objective_complexity}
\end{align}
If the generated sequence is infinite, then
\begin{align}
\sum_{k=0}^{\infty}
\|\mathcal G_{\alpha_k}(p^{(k)})\|^2
&\leq
\frac{f(p^{(0)})-f^\star}
{\sigma\underline\alpha},
\label{eq:gradient_mapping_summability}\\
\|\mathcal G_{\alpha_k}(p^{(k)})\|
&\longrightarrow0
\qquad\text{as }k\to\infty.
\label{eq:gradient_mapping_vanishes}
\end{align}
\end{theorem}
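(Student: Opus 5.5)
The plan is to work entirely from the two ingredients supplied by Theorem~\ref{thm:well_defined}: the sufficient-decrease inequality \eqref{eq:sufficient_decrease_mapping} and the bounds $\underline\alpha\le\alpha_k\le\bar\alpha$. For the stationarity bound, I will use $p^{(k+1)}=\tilde p^{(k)}$ for $k\le K$. Summing \eqref{eq:sufficient_decrease_mapping} over $k=0,\dots,K$ and telescoping gives
\begin{align}
\sigma\underline\alpha\sum_{k=0}^{K}\|\mathcal G_{\alpha_k}(p^{(k)})\|^2
\le f(p^{(0)})-f(p^{(K+1)})\le f(p^{(0)})-f^\star .
\end{align}
The last step holds because $p^{(K+1)}\in\mathcal F^{(0)}\subset C\cap U$ and $p^\star$ is optimal; its existence comes from Corollary~\ref{thm:existence}. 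Bounding the minimum by the average then yields \eqref{eq:global_stationarity_complexity}. If the sequence is infinite, letting $K\to\infty$ gives \eqref{eq:gradient_mapping_summability}, and summability forces \eqref{eq:gradient_mapping_vanishes}.

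For the objective-residual bound \eqref{eq:global_objective_complexity}, I will derive a one-step inequality of the form
\begin{align}
2\alpha_k\bigl(f(p^{(k+1)})-f^\star\bigr)\le \|p^{(k)}-p^\star\|^2-\|p^{(k+1)}-p^\star\|^2 .
\end{align}
The ingredients are as follows. By convexity of $f$ on $U$, and since $p^{(k)},p^\star\in U$,
\begin{align}
f(p^{(k)})\le f^\star+\nabla f(p^{(k)})^\top(p^{(k)}-p^\star).
\end{align}
The Armijo acceptance condition \eqref{eq:armijo_acceptance_condition} bounds $f(p^{(k+1)})$. The projection inequality $(p^{(k)}-\alpha_k\nabla f(p^{(k)})-p^{(k+1)})^\top(p^\star-p^{(k+1)})\le0$ holds because $p^\star\in C$.

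This step is the main obstacle. With $\sigma<1$, the Armijo condition alone does not reproduce the clean descent-lemma estimate $f(p^{(k+1)})\le f(p^{(k)})+\nabla f^\top(p^{(k+1)}-p^{(k)})+\frac{1}{2\alpha_k}\|p^{(k+1)}-p^{(k)}\|^2$. I therefore expect an error term of order $\|p^{(k+1)}-p^{(k)}\|^2=\alpha_k^2\|\mathcal G_{\alpha_k}\|^2$ to remain. Concretely, combining the three facts should give
\begin{align}
2\alpha_k\bigl(f(p^{(k+1)})-f^\star\bigr)
\le \|p^{(k)}-p^\star\|^2-\|p^{(k+1)}-p^\star\|^2
+c\,\alpha_k\bigl(f(p^{(k)})-f(p^{(k+1)})\bigr)/\sigma
\end{align}
for a small constant $c$. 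To get this, I will use \eqref{eq:cited_projection_descent} to convert $\|p^{(k+1)}-p^{(k)}\|^2$ into the linear term. I will then use \eqref{eq:armijo_acceptance_condition} to bound that linear term by $(f(p^{(k)})-f(p^{(k+1)}))/\sigma$.

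Summing over $k=0,\dots,K-1$ proceeds as follows. On the left I use $\alpha_k\ge\underline\alpha$ together with the monotonicity $f(p^{(K)})\le f(p^{(k+1)})$. On the right the distance terms telescope. The error terms telescope to at most $\bar\alpha(f(p^{(0)})-f^\star)/\sigma$, using $\alpha_k\le\bar\alpha$.

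This produces a bound with denominator $2\underline\alpha K$. To obtain exactly $K+1$ in \eqref{eq:global_objective_complexity}, I would add the $k=0$ contribution, $f(p^{(0)})-f^\star$. This term is absorbed by $2\bar\alpha(f(p^{(0)})-f^\star)/\sigma\ge 2\underline\alpha(f(p^{(0)})-f^\star)$, since $\bar\alpha\ge\underline\alpha$ and $\sigma<1$. The case $K=0$ is covered by the same absorption, with $\|p^{(0)}-p^\star\|^2\ge0$. I will adjust the constant bookkeeping so that the numerator matches \eqref{eq:global_objective_complexity} exactly; this is where I expect the most care to be needed.
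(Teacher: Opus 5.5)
Your stationarity argument is the same as the paper's. For \eqref{eq:global_objective_complexity} your route also works, but the constant bookkeeping you sketch is wrong, and that bookkeeping is exactly what produces the factor $K+1$. Write $g_k:=\nabla f(p^{(k)})$ and $d_k:=p^{(k+1)}-p^{(k)}$. Your three ingredients then give precisely
\begin{align*}
2\alpha_k\bigl(f(p^{(k+1)})-f^\star\bigr)
&\le\|p^{(k)}-p^\star\|^2-\|p^{(k+1)}-p^\star\|^2\\
&\quad-\|d_k\|^2+2(1-\sigma)\alpha_k\,g_k^\top(-d_k).
\end{align*}
The quadratic term enters with a minus sign and is simply dropped, so no conversion through \eqref{eq:cited_projection_descent} is needed. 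Armijo then bounds the linear term, which gives $c=2(1-\sigma)$ rather than a ``small'' constant. The error sum is therefore at most $2(1-\sigma)\bar\alpha\bigl(f(p^{(0)})-f^\star\bigr)/\sigma$. This exceeds your claimed $\bar\alpha\bigl(f(p^{(0)})-f^\star\bigr)/\sigma$ whenever $\sigma<1/2$, for example $\sigma=0.1$ in Section~\ref{sec:numerical_experiment}.

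Your absorption step is also justified against the wrong quantity. The extra $2\underline\alpha\bigl(f(p^{(0)})-f^\star\bigr)$ must fit into what remains of the budget $2\bar\alpha/\sigma$ after the error terms, not into the whole budget. With your $c=1$ bookkeeping you would need $2\underline\alpha\le\bar\alpha/\sigma$, which fails when $\underline\alpha=\bar\alpha$ and $\sigma>1/2$. With the correct $c$, the leftover is $2\bar\alpha/\sigma-2(1-\sigma)\bar\alpha/\sigma=2\bar\alpha\ge2\underline\alpha$, and the numerator of \eqref{eq:global_objective_complexity} comes out exactly.

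The paper uses firm nonexpansiveness of $P_C$ at the fixed point $p^\star$, which is equivalent to your variational inequality. It applies convexity so that $f(p^{(k)})$, not $f(p^{(k+1)})$, stays on the left, and it charges the whole term $2\alpha_k g_k^\top(-d_k)$ to Armijo. Before $\alpha_k$ is replaced by $\bar\alpha$, this gives $2\alpha_k(f(p^{(k)})-f^\star)\le\|p^{(k)}-p^\star\|^2-\|p^{(k+1)}-p^\star\|^2+2\alpha_k(f(p^{(k)})-f(p^{(k+1)}))/\sigma$. Your inequality with $c=2(1-\sigma)$ is this one with $2\alpha_k(f(p^{(k)})-f(p^{(k+1)}))$ moved to the other side, so the two one-step estimates are identical. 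They differ only in indexing. The paper sums over $k=0,\dots,K$ and gets $K+1$ directly, with no absorption step. Yours sums over $k=0,\dots,K-1$ and needs the separate correction, but it uses only $p^{(0)},\dots,p^{(K)}$ rather than $p^{(K+1)}$.
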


\begin{proof}
See Appendix~\ref{app:localized_rates_and_full_sequence}.
\qed
\end{proof}

Theorem~\ref{thm:global_convergence} shows that
\eqref{eq:global_objective_complexity} yields an
$\mathcal O(1/K)$ objective-residual bound and that
\eqref{eq:global_stationarity_complexity} yields an
$\mathcal O(K^{-1/2})$ best-iterate stationarity bound.
The latter implies that, for any tolerance
$\varepsilon_{\rm tol}>0$, an iterate satisfying
$\|\mathcal G_{\alpha_k}(p^{(k)})\|
\leq
\varepsilon_{\rm tol}$
is obtained within
$\mathcal O(\varepsilon_{\rm tol}^{-2})$
accepted projected-gradient iterations.
In addition, \eqref{eq:gradient_mapping_vanishes} guarantees convergence
of the stationarity residual to zero along every infinite run.

Theorem~\ref{thm:well_defined} confines all iterates to the compact set
$\mathcal F^{(0)}$ and bounds the accepted step sizes away from zero,
whereas Theorem~\ref{thm:global_convergence} gives
\eqref{eq:gradient_mapping_vanishes}.
Consequently, every cluster point is optimal by continuity of the gradient
mapping and the optimality characterization
\eqref{eq:cited_gradient_mapping_optimality}.
A standard quasi-Fej\'er argument
\cite[Thm.~2]{iusem2003projected}
then yields convergence of the full iterate sequence.

\begin{corollary}
\label{cor:full_sequence_convergence}
Under Assumptions~\ref{ass:set}--\ref{ass:blowup},
Algorithm~\ref{alg:PG_general} with $\varepsilon=0$ either terminates at an
optimal solution or generates an infinite sequence that converges to an
optimal solution.
\end{corollary}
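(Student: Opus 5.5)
We split into the two cases allowed by the statement. If the algorithm terminates with $\varepsilon=0$, it returns $p^{(k)}$ with $\|\tilde p^{(k)}-p^{(k)}\|/\alpha_k=0$. That is, $\mathcal G_{\alpha_k}(p^{(k)})=0$. Since $p^{(k)}\in\mathcal F^{(0)}\subset C\cap U$ by Theorem~\ref{thm:well_defined}, the characterization \eqref{eq:cited_gradient_mapping_optimality} shows that $p^{(k)}$ is optimal.

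Now suppose the sequence is infinite. The first step is to show that every cluster point is optimal. By Theorem~\ref{thm:well_defined}, all iterates lie in the compact set $\mathcal F^{(0)}$, so cluster points exist and lie in $\mathcal F^{(0)}\subset U$. Take a subsequence $p^{(k_j)}\to\hat p$. Since $\alpha_{k_j}\in[\underline\alpha,\bar\alpha]$, we may also pass to a further subsequence with $\alpha_{k_j}\to\hat\alpha\in[\underline\alpha,\bar\alpha]$. The map $(p,\alpha)\mapsto\alpha^{-1}(p-P_C(p-\alpha\nabla f(p)))$ is continuous on $U\times(0,\infty)$, because $\nabla f$ is continuous and $P_C$ is nonexpansive. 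Hence \eqref{eq:gradient_mapping_vanishes} gives $\mathcal G_{\hat\alpha}(\hat p)=0$, and \eqref{eq:cited_gradient_mapping_optimality} shows that $\hat p$ is optimal. The positive lower bound $\underline\alpha$ is essential here: without it the limit step could be zero and the limiting mapping would be meaningless.

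The second step is quasi-Fej\'er monotonicity with respect to the optimal set $S$. Fix $p^\star\in S$. We apply the projection inequality with the point $p^\star\in C$: $(p^{(k)}-\alpha_k\nabla f(p^{(k)})-p^{(k+1)})^\top(p^\star-p^{(k+1)})\le0$. Expanding $\|p^{(k+1)}-p^\star\|^2$ then gives
\[
\|p^{(k+1)}-p^\star\|^2\le\|p^{(k)}-p^\star\|^2-\|p^{(k+1)}-p^{(k)}\|^2+2\alpha_k\nabla f(p^{(k)})^\top(p^\star-p^{(k+1)}).
\]
We split the last inner product as $\nabla f(p^{(k)})^\top(p^\star-p^{(k)})+\nabla f(p^{(k)})^\top(p^{(k)}-p^{(k+1)})$. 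By convexity of $f$ on $U$, the first term is at most $f^\star-f(p^{(k)})\le0$; the whole segment lies in the convex set $C\cap U$, so this is valid. By the Armijo condition \eqref{eq:armijo_acceptance_condition}, the second term is at most $(f(p^{(k)})-f(p^{(k+1)}))/\sigma$. Therefore
\[
\|p^{(k+1)}-p^\star\|^2\le\|p^{(k)}-p^\star\|^2+\varepsilon_k,\qquad \varepsilon_k:=\tfrac{2\bar\alpha}{\sigma}\bigl(f(p^{(k)})-f(p^{(k+1)})\bigr)\ge0 .
\]
The sequence $\varepsilon_k$ is summable because it telescopes and $f\ge f^\star$ on the feasible set. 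This computation is the one already behind \eqref{eq:global_objective_complexity}, so it can be reused from that proof.

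The final step is the standard argument of \cite[Thm.~2]{iusem2003projected}. For each $p^\star\in S$, quasi-Fej\'er monotonicity with summable errors implies that $\|p^{(k)}-p^\star\|$ converges. Let $\hat p$ be a cluster point; it lies in $S$ by the first step. Then $\|p^{(k)}-\hat p\|$ converges, and its limit must be $0$ along the subsequence converging to $\hat p$. Hence the whole sequence converges to $\hat p$, which is optimal. The only delicate point is the one already noted: the first step needs the iteration-independent bound from Theorem~\ref{thm:well_defined}, and the convexity inequality needs all iterates to stay inside $U$, which the invariance of $\mathcal F^{(0)}$ guarantees.
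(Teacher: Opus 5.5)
Your proposal is correct and follows the paper's proof essentially step by step. It shows optimality at termination via the gradient-mapping characterization, optimality of every cluster point via continuity of $(p,\alpha)\mapsto\mathcal G_\alpha(p)$ at a limit step size $\widehat\alpha\geq\underline\alpha>0$, and full-sequence convergence from the quasi-Fej\'er inequality with the same summable error $\frac{2\bar\alpha}{\sigma}\bigl(f(p^{(k)})-f(p^{(k+1)})\bigr)$. The only difference is cosmetic: you derive that inequality from the variational projection inequality rather than from firm nonexpansiveness, and this gives the identical estimate.
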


\begin{proof}
See Appendix~\ref{app:localized_rates_and_full_sequence}.
\qed
\end{proof}
\subsection{Feasible-direction curvature and linear convergence}
\label{sec:strong_convexity}

The preceding results require only convexity and therefore allow nonunique
optimal solutions; they also provide sublinear objective and stationarity
bounds. We now identify an additional curvature condition that guarantees
uniqueness and linear convergence. Requiring positive curvature in every
direction of $\Real^n$ would be unnecessarily strong because the feasible
set may lie in a lower-dimensional affine subspace. It is
therefore sufficient to control the curvature only along directions in which
two feasible points can differ.

To describe these directions, define
\begin{align}
\mathcal L_C
:=
\lin(C-C),
\label{eq:direction_space}
\end{align}
the linear space generated by all displacements between points of $C$.
Hence, every feasible displacement $q-p$, with $p,q\in C$, belongs to
$\mathcal L_C$. For
$p\in U$ and a unit vector $v\in\Real^n$, we call
\begin{align}
v^\top\nabla^2 f(p)v
=
\left.
\frac{\D^2}{\D t^2}f(p+tv)
\right|_{t=0}
\end{align}
the directional curvature of $f$ at $p$ along $v$. It measures the
second-order growth of $f$ relative to its affine first-order model along
that direction. Since objective comparisons between feasible points involve
only displacements in $\mathcal L_C$, curvature in directions orthogonal to
$\mathcal L_C$ is irrelevant to strong convexity over the feasible set.

The next result shows that pointwise positive directional curvature on
the compact invariant sublevel set yields a uniform strong-convexity
bound along the feasible-direction space $\mathcal L_C$. This is the
additional ingredient needed for uniqueness and linear convergence.

\begin{lemma}
\label{lem:compact_strong_convexity}
Suppose that $\mathcal L_C\ne\{0\}$ and that
\begin{align}
v^\top\nabla^2 f(p)v>0
\label{eq:PD_tangent}
\end{align}
for every $p\in\mathcal F^{(0)}$ and every
$v\in\mathcal L_C$ with $\|v\|=1$.
Define the minimum feasible-direction curvature by
\begin{align}
\mu
:=
\min_{\substack{
p\in\mathcal F^{(0)}\\
v\in\mathcal L_C,\ \|v\|=1}}
v^\top\nabla^2 f(p)v.
\label{eq:general_strong_convexity_constant}
\end{align}
Then $\mu>0$, and
\begin{align}
v^\top\nabla^2 f(p)v
\geq
\mu\|v\|^2
\label{eq:uniform_feasible_direction_curvature}
\end{align}
for every $p\in\mathcal F^{(0)}$ and every
$v\in\mathcal L_C$.

Consequently, $f$ is $\mu$-strongly convex on
$\mathcal F^{(0)}$; that is,
\begin{align}
f(q)
\geq
f(p)+\nabla f(p)^\top(q-p)
+\frac{\mu}{2}\|q-p\|^2
\label{eq:general_strong_convexity}
\end{align}
for every $p,q\in\mathcal F^{(0)}$.
\end{lemma}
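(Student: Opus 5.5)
The proof has three steps: show the minimum $\mu$ is attained and positive, rescale to get the bound for all $v\in\mathcal L_C$, and integrate along segments to obtain strong convexity.

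\emph{Step 1: $\mu$ is attained and positive.} Consider the set
\begin{align}
S:=\mathcal F^{(0)}\times\{v\in\mathcal L_C\mid \|v\|=1\}.
\end{align}
By Lemma~\ref{lem:sublevel_geometry}, $\mathcal F^{(0)}$ is nonempty and compact. The unit sphere of the nonzero linear subspace $\mathcal L_C$ is nonempty, closed, and bounded, so $S$ is a nonempty compact set. Because $\mathcal F^{(0)}\subset U$ and $f$ is $C^2$ on $U$ (Assumption~\ref{ass:objective_regularity}), the map $(p,v)\mapsto v^\top\nabla^2 f(p)v$ is continuous on $S$. Hence the minimum in \eqref{eq:general_strong_convexity_constant} is attained at some $(\hat p,\hat v)\in S$, and \eqref{eq:PD_tangent} gives $\mu=\hat v^\top\nabla^2 f(\hat p)\hat v>0$.

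\emph{Step 2: the curvature bound for all $v\in\mathcal L_C$.} For $v=0$, inequality \eqref{eq:uniform_feasible_direction_curvature} is trivial. For $v\neq0$ in $\mathcal L_C$, the vector $v/\|v\|$ is a unit vector in $\mathcal L_C$, so the definition of $\mu$ applies to it. Multiplying the resulting inequality by $\|v\|^2$, using homogeneity of the quadratic form, gives \eqref{eq:uniform_feasible_direction_curvature}.

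\emph{Step 3: strong convexity on $\mathcal F^{(0)}$.} Fix $p,q\in\mathcal F^{(0)}$ and set $v:=q-p$. Since $p,q\in C$, we have $v\in\mathcal L_C$. Lemma~\ref{lem:sublevel_geometry} says $\mathcal F^{(0)}$ is convex, so the point $p_t:=p+tv$ lies in $\mathcal F^{(0)}\subset U$ for every $t\in[0,1]$. Let $\phi(t):=f(p_t)$. Then $\phi$ is $C^2$ on $[0,1]$ with
\begin{align}
\phi'(0)=\nabla f(p)^\top v,
\qquad
\phi''(t)=v^\top\nabla^2 f(p_t)v\geq\mu\|v\|^2,
\end{align}
where the inequality is Step~2 applied at the point $p_t\in\mathcal F^{(0)}$. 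Taylor's formula with integral remainder gives
\begin{align}
\phi(1)=\phi(0)+\phi'(0)+\int_0^1(1-t)\phi''(t)\,\D t.
\end{align}
Since $\int_0^1(1-t)\,\D t=1/2$, the integral is at least $\frac{\mu}{2}\|v\|^2$, which is exactly \eqref{eq:general_strong_convexity}.

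The only point requiring care is in Step~3: the lower bound on $\phi''(t)$ requires the whole segment $[p,q]$ to lie in $\mathcal F^{(0)}$, not merely in $U$. This holds because $\mathcal F^{(0)}$ is convex, which is part of Lemma~\ref{lem:sublevel_geometry}.
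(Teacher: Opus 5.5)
Your proposal is correct and follows essentially the same route as the paper's proof: compactness of $\mathcal F^{(0)}$ times the unit sphere of $\mathcal L_C$ together with continuity of the Hessian quadratic form gives an attained positive minimum, which you then rescale to all of $\mathcal L_C$. You then apply Taylor's formula with integral remainder along segments, which stay in $\mathcal F^{(0)}$ by convexity and whose displacements lie in $\mathcal L_C$.
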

\begin{proof}
See Appendix~\ref{app:feasible_direction_curvature_and_linear_convergence}.
\qed
\end{proof}

Linear convergence of projected gradient for a globally smooth and
strongly convex objective is classical
\cite{bertsekas1976gradient,beck2017first}. Here, however, neither
smoothness nor strong convexity is imposed globally: both are obtained only
on the invariant initial sublevel set $\mathcal{F}^{(0)}$, and positive curvature is required
only along the feasible direction space $\mathcal{L}_C$. The proof in
Appendix~\ref{app:feasible_direction_curvature_and_linear_convergence}
localizes the classical argument to this open-domain setting and derives the
explicit contraction factor for the variable step sizes generated by the
domain-aware Armijo search.

\begin{theorem}
\label{thm:linear_convergence}
Suppose Assumptions~\ref{ass:set}--\ref{ass:blowup} and
\eqref{eq:PD_tangent} hold. Then the optimum $p^\star$ is unique. For
$\mu$ in Lemma~\ref{lem:compact_strong_convexity} when
$\mathcal L_C\ne\{0\}$, any sequence $\{p^{(k)}\}$ generated
by Algorithm~\ref{alg:PG_general} before termination satisfies
\begin{align}
\|p^{(k)}-p^\star\|
\leq
\sqrt{\frac{2(f(p^{(0)})-f(p^\star))}{\mu}}\,\gamma^{k/2},
\label{eq:linear_iterate_convergence}
\end{align}
where
\begin{align}
\gamma:=1-\sigma\min\{1,\mu\underline\alpha\}\in(0,1).
\label{eq:linear_rate}
\end{align}
\end{theorem}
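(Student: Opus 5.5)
The plan is to derive a one-step contraction of the objective residual, $f(p^{(k+1)})-f^\star\le\gamma\,(f(p^{(k)})-f^\star)$, and then turn it into an iterate bound through a quadratic-growth inequality at $p^\star$. Everything happens on the invariant set $\mathcal F^{(0)}$ from Theorem~\ref{thm:well_defined}.

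\emph{The degenerate case and uniqueness.} If $\mathcal L_C=\{0\}$, then $C$ is a single point. Hence $C\cap U=\{p^{(0)}\}=\{p^\star\}$, every iterate equals $p^\star$, and there is nothing to prove. Assume now $\mathcal L_C\neq\{0\}$. An optimum exists by Corollary~\ref{thm:existence}, and every optimum lies in $\mathcal F^{(0)}$ because $f^\star\le f(p^{(0)})$. Lemma~\ref{lem:compact_strong_convexity} makes $f$ $\mu$-strongly convex on the convex set $\mathcal F^{(0)}$, hence strictly convex there, so Corollary~\ref{thm:existence} gives uniqueness. Next, combine \eqref{eq:general_strong_convexity} at the base point $p^\star$ with the first-order optimality condition $\nabla f(p^\star)^\top(p-p^\star)\ge0$ for $p\in C\cap U$, which follows from convexity of $C\cap U$. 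This yields the quadratic-growth bound $f(p)-f^\star\ge\frac{\mu}{2}\|p-p^\star\|^2$ for all $p\in\mathcal F^{(0)}$.

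\emph{One-step contraction.} Fix $k$ and write $p=p^{(k)}$, $\alpha=\alpha_k$, $q=\tilde p^{(k)}=p^{(k+1)}$, $\mathcal G=\mathcal G_\alpha(p)$, and $D:=\nabla f(p)^\top\mathcal G$.
\begin{enumerate}
\item By \eqref{eq:cited_projection_descent}, $D\ge\|\mathcal G\|^2$.
\item Apply \eqref{eq:general_strong_convexity} with base point $p$ and $q:=p^\star$. This gives $f(p)-f^\star\le\nabla f(p)^\top(p-p^\star)-\frac{\mu}{2}\|p-p^\star\|^2$.
\item Split $\nabla f(p)^\top(p-p^\star)=\alpha D+\nabla f(p)^\top(q-p^\star)$. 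The variational inequality for the projection, tested at $z=p^\star\in C$, reads $(\mathcal G-\nabla f(p))^\top(p^\star-q)\le0$. It gives $\nabla f(p)^\top(q-p^\star)\le\mathcal G^\top(q-p^\star)=\mathcal G^\top(p-p^\star)-\alpha\|\mathcal G\|^2$.
\item Bound $\mathcal G^\top(p-p^\star)-\frac{\mu}{2}\|p-p^\star\|^2\le\|\mathcal G\|^2/(2\mu)$ by Young's inequality. Then
\begin{align}
f(p)-f^\star\le\alpha D+\Bigl(\frac{1}{2\mu}-\alpha\Bigr)\|\mathcal G\|^2\le\max\Bigl\{\alpha,\frac{1}{2\mu}\Bigr\}D,
\end{align}
where the last step uses $\|\mathcal G\|^2\le D$ when the bracket is positive and drops the term otherwise.
\item Consequently $\alpha D\ge\min\{1,2\mu\alpha\}(f(p)-f^\star)\ge\min\{1,\mu\alpha\}(f(p)-f^\star)$.
\end{enumerate}
The Armijo acceptance \eqref{eq:armijo_acceptance_condition} reads $f(q)\le f(p)-\sigma\alpha D$. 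Using $\alpha_k\ge\underline\alpha$ from \eqref{eq:uniform_stepsize_lower_bound}, it gives
\begin{align}
f(q)-f^\star\le\bigl(1-\sigma\min\{1,\mu\underline\alpha\}\bigr)(f(p)-f^\star)=\gamma\,(f(p)-f^\star).
\end{align}
Here $\gamma\in(0,1)$ because $\sigma\in(0,1)$ and $\mu\underline\alpha>0$.

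\emph{Conclusion and main obstacle.} Iterating gives $f(p^{(k)})-f^\star\le\gamma^k(f(p^{(0)})-f^\star)$. Quadratic growth then yields \eqref{eq:linear_iterate_convergence}; iterates produced before termination are covered as well. The delicate part is the contraction step: it must use only the accepted Armijo inequality and the projection characterization, never a Lipschitz bound on the segment toward $p^\star$ or a rejected trial. It must also keep all strong-convexity evaluations inside $\mathcal F^{(0)}$, which holds because $p,p^\star\in\mathcal F^{(0)}$. The case split on the sign of $\frac{1}{2\mu}-\alpha$ is what produces the $\min\{1,\mu\underline\alpha\}$ factor.
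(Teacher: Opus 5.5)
Your proof is correct. Its overall structure matches the paper's: an objective contraction $\delta_{k+1}\le\gamma\delta_k$ obtained from the accepted Armijo inequality and $\alpha_k\ge\underline\alpha$, followed by the quadratic-growth bound $\delta_k\ge\frac{\mu}{2}\|p^{(k)}-p^\star\|^2$.

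You differ from the paper in how you derive the key lower bound on the Armijo decrease $\nabla f(p^{(k)})^\top(p^{(k)}-p^{(k+1)})$.

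\begin{itemize}
\item The paper uses the fact that $p^{(k+1)}$ minimizes the proximal linear model $\nabla f(p^{(k)})^\top(x-p^{(k)})+\frac{1}{2\alpha_k}\|x-p^{(k)}\|^2$ over $C$. It tests this against the interpolated point $z=(1-t_k)p^{(k)}+t_kp^\star$ with $t_k=\min\{1,\mu\alpha_k\}$, chosen so that the leftover term $(\frac{t_k^2}{2\alpha_k}-\frac{\mu t_k}{2})\|p^\star-p^{(k)}\|^2$ is nonpositive. This gives $-\nabla f(p^{(k)})^\top d^{(k)}\ge t_k\delta_k$ in a single chain with no case distinction.
\item You instead test the projection's variational inequality directly at $p^\star$. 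You then absorb the cross term $\mathcal G^\top(p-p^\star)$ into the strong-convexity term by Young's inequality, and split on the sign of $\frac{1}{2\mu}-\alpha_k$.
\end{itemize}

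Your route costs a case split but gives the sharper factor $\min\{1,2\mu\alpha_k\}$. This bound holds with equality for $f(x)=\frac{\mu}{2}\|x\|^2$ at interior points when $\mu\alpha_k\le\frac12$. So you actually prove the contraction $1-\sigma\min\{1,2\mu\underline\alpha\}$ before weakening it to the stated $\gamma$.

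The paper's interpolation argument loses this factor but is shorter. It is also the standard template for gradient-mapping analyses.

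Both arguments invoke strong convexity only between $p^{(k)}$ and $p^\star$, so both stay inside $\mathcal F^{(0)}$ as required.
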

\begin{proof}
See Appendix~\ref{app:feasible_direction_curvature_and_linear_convergence}.
\qed
\end{proof}

\section{Controllability Scoring with Prescribed Input Directions}
\label{sec:generalized_controllability_scoring}

This section generalizes the controllability-scoring problems in
\cite{sato2024scores,sato2025uniqueness} by allocating a limited actuation
resource among an arbitrary finite collection of prescribed input
directions, possibly fewer than the state dimension, over a nonempty compact
convex subset of the allocation simplex.
Unlike target controllability scoring
\cite{sato2025target}, which restricts the controlled outputs through an
output controllability Gramian, the present formulation retains full-state
controllability and generalizes the admissible input directions and their
allocations.

We apply the open-domain optimization framework developed in
Section~\ref{sec:general_problem} to establish existence, computation, and
uniqueness properties for this generalized problem. This application
requires four ingredients not supplied directly by the abstract framework:
identification of the positive-definiteness domain, verification of boundary
blow-up for the VCS and AECS objectives, characterization of feasible
initialization under the restricted input set, and translation of
feasible-direction Hessian curvature into system-theoretic conditions.
Although the number of prescribed input directions may be smaller than the
state dimension, full-state controllability remains necessary for the
finite-horizon controllability Gramian to be positive definite.

Consider an $n$-dimensional linear system with autonomous dynamics
\begin{align}
\dot{x}(t)
=
Ax(t),
\qquad
A\in\Real^{n\times n}.
\label{eq:generalized_CS_autonomous_system}
\end{align}
We prescribe an arbitrary collection of $m$ candidate input directions,
\begin{align}
b_1,\ldots,b_m\in\Real^n,
\qquad
m\geq1.
\label{eq:generalized_candidate_directions}
\end{align}
The number and geometry of these directions are prescribed by the available
actuation mechanism and need not coincide with the state dimension or the
coordinate directions. In particular, no relation between $m$ and $n$ is
imposed: the number of candidate input directions may be smaller than, equal
to, or larger than the state dimension, and the vectors
$b_1,\ldots,b_m$ need not be linearly independent. Thus, the formulation
covers restricted collections with $m<n$ as well as possibly redundant
collections with $m>n$.

Following the continuous resource-allocation formulation used in
controllability scoring
\cite{sato2024scores,sato2025uniqueness}, we represent the distribution of
actuation resource among the prescribed candidate directions by
\begin{align}
p
=
\begin{bmatrix}
p_1 & \cdots & p_m
\end{bmatrix}^{\top}
\in\Real^m_+.
\end{align}
The allocation \(p_i\) scales the \(i\)th candidate direction \(b_i\)
through the resource-weighted input matrix
\begin{align}
B(p)
:=
\begin{bmatrix}
\sqrt{p_1}b_1 & \cdots & \sqrt{p_m}b_m
\end{bmatrix},
\label{eq:resource_weighted_input_matrix}
\end{align}
so that the controlled system is
\begin{align}
\dot{x}(t)
=
Ax(t)+B(p)u(t),
\qquad
u(t)\in\Real^m.
\label{eq:generalized_controlled_system}
\end{align}
Accordingly, \(p_i\) quantifies the amount of actuation resource assigned
to the \(i\)th candidate direction. The interpretation of equal allocation
values depends on the scaling of the vectors \(b_i\): if the components of
\(p\) are to represent directly comparable resource levels, the candidate
directions should be normalized consistently, whereas fixed differences in
nominal actuator gains may instead be incorporated into the vectors \(b_i\).

We normalize the total actuation resource to one and define the allocation
simplex
\begin{align}
\Delta_m
:=
\left\{
p\in\Real^m
\ \middle|\
p_i\geq0,\quad
\sum_{i=1}^m p_i=1
\right\}.
\label{eq:generalized_allocation_simplex}
\end{align}
The choice $p\in\Delta_m$ represents an unrestricted allocation of the
normalized resource among all candidate input directions.
Additional restrictions on the admissible allocation are represented by a
nonempty compact convex set
\begin{align}
C\subseteq\Delta_m,
\label{eq:generalized_allocation_constraint}
\end{align}
and we assume that the Euclidean projection onto $C$ can be computed
efficiently. The set $C$ may impose individual upper or lower bounds,
groupwise budgets, or symmetry constraints. It also represents candidate
exclusion: prescribing $p_i=0$ removes the $i$th input direction from every
admissible allocation. 

In applying the general framework developed in
Section~\ref{sec:general_problem} to controllability scoring, we address
three questions: when the scoring problem is feasible, whether the
domain-aware algorithm converges without assuming uniqueness, and under
what conditions the optimal allocation is unique and the convergence is
linear.
\subsection{Generalized finite-horizon controllability scores}
\label{sec:generalized_CS_problem}

The key structural property of the generalized formulation is that the
finite-horizon controllability Gramian depends linearly on the resource
allocation. Fix a time horizon $T>0$. The controllability Gramian of
system~\eqref{eq:generalized_controlled_system} is
\begin{align}
W(p,T)
:=
\int_0^T
\e^{At}B(p)B(p)^\top\e^{A^\top t}
\,\D t.
\label{eq:generalized_finite_horizon_Gramian}
\end{align}
Since
$B(p)B(p)^\top
=
\sum_{i=1}^m p_i b_i b_i^\top$,
the Gramian admits the decomposition
\begin{align}
W(p,T)
=
\sum_{i=1}^m p_iW_i(T),
\label{eq:generalized_Gramian_decomposition}
\end{align}
where
\begin{align}
W_i(T)
:=
\int_0^T
\e^{At}b_i b_i^\top\e^{A^\top t}
\,\D t
\succeq O.
\label{eq:generalized_individual_Gramian}
\end{align}
Thus, $W_i(T)$ is the finite-horizon Gramian contribution of the $i$th
candidate input direction, while $p_i$ determines the weight assigned to
that contribution.

This linear decomposition identifies the natural open domain of the
controllability-score objectives. Define the linear Gramian map
\begin{align}
\Phi_T:\Real^m\to\mathbb S^n,
\qquad
\Phi_T(v)
:=
\sum_{i=1}^m v_iW_i(T).
\label{eq:generalized_Gramian_map}
\end{align}
Then $W(p,T)=\Phi_T(p)$, and the allocations for which the Gramian is
positive definite form the set
\begin{align}
X_T
:=
\left\{
p\in\Real^m
\ \middle|\
W(p,T)\succ O
\right\}.
\label{eq:generalized_positive_definiteness_domain}
\end{align}
Because $X_T$ is the inverse image of the positive-definite cone under the
linear map $\Phi_T$, it is an open convex subset of $\Real^m$. 

The generalized VCS and AECS problems are obtained by applying the standard
log-determinant and inverse-trace criteria to this linear Gramian
\cite{sato2024scores,sato2025uniqueness}. Specifically, for \(p\in X_T\),
define
\begin{align}
f_{\rm V}(p)
&:=
-\log\det W(p,T),
\label{eq:generalized_VCS_objective}\\
f_{\rm A}(p)
&:=
\tr\left(W(p,T)^{-1}\right).
\label{eq:generalized_AECS_objective}
\end{align}
For either \(f\in\{f_{\rm V},f_{\rm A}\}\), the generalized
controllability-scoring problem is
\begin{framed}
\vspace{-1em}
\begin{align}
\label{prob:generalized_controllability_scoring}
\begin{aligned}
&&& \text{minimize}   && f(p) \\
&&& \text{subject to} && p\in C\cap X_T.
\end{aligned}
\end{align}
\vspace{-1em}
\end{framed}
\noindent
The conventional node-wise controllability-scoring problem studied in
\cite{sato2024scores,sato2025uniqueness} is recovered by setting
\begin{align}
m=n,
\qquad
b_i=e_i
\quad
(i=1,\ldots,n),
\qquad
C=\Delta_n,
\label{eq:conventional_scoring_specialization}
\end{align}
where \(e_i\) denotes the \(i\)th standard basis vector of \(\Real^n\).

Problem~\eqref{prob:generalized_controllability_scoring}
has the structure studied in Section~\ref{sec:general_problem}: \(C\) is
the compact convex set of explicit allocation constraints, whereas \(X_T\)
is the open domain on which the objective is well defined.

We next characterize when the admissible set $C$ contains an allocation
with a positive-definite controllability Gramian, or equivalently, when
$C\cap X_T$ is nonempty. For a nonempty compact convex set
$C\subseteq\Delta_m$, define the set of candidate input directions that can
receive positive allocation by
\begin{align}
S_C
:=
\left\{
i\in\{1,\ldots,m\}
\ \middle|\
p_i>0\text{ for some }p\in C
\right\}.
\label{eq:active_candidate_set}
\end{align}
Let $B_{S_C}$ denote the matrix whose columns are the candidate directions
$b_i$ with $i\in S_C$.

\begin{lemma}
\label{lem:general_convex_feasibility}
Let $p^\circ\in\operatorname{ri}C$. For every $T>0$,
\begin{align}
C\cap X_T\neq\emptyset
&\quad\Leftrightarrow\quad
\sum_{i\in S_C}W_i(T)\succ O
\notag\\
&\quad\Leftrightarrow\quad
(A,B_{S_C})\text{ is controllable}.
\label{eq:general_convex_feasibility_equivalence}
\end{align}
Whenever these conditions hold, $p^\circ\in C\cap X_T$.
\end{lemma}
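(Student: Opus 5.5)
The plan is to prove a cycle of implications, together with the claim about $p^\circ$. The key structural fact is that a point in the relative interior of $C$ has positive components on all of $S_C$.

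\textbf{Step 1 (support of $p^\circ$).} I will show $p^\circ_i>0$ for every $i\in S_C$. Fix $i\in S_C$ and pick $q\in C$ with $q_i>0$. Since $p^\circ\in\operatorname{ri}C$, there is $\epsilon>0$ with $p^\circ+\epsilon(p^\circ-q)\in C\subseteq\Delta_m$. Its $i$th component is nonnegative, so
\[
(1+\epsilon)p^\circ_i\ge\epsilon q_i>0 .
\]
Conversely, every $p\in C$ satisfies $p_i=0$ for $i\notin S_C$ by definition of $S_C$. Hence, for every $p\in C$,
\[
W(p,T)=\sum_{i\in S_C}p_iW_i(T)\preceq\Big(\max_{i\in S_C} p_i\Big)\sum_{i\in S_C}W_i(T),
\]
using $W_i(T)\succeq O$ and $p_i\le 1$. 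At $p^\circ$ we also get the lower bound
\[
W(p^\circ,T)\succeq\Big(\min_{i\in S_C}p^\circ_i\Big)\sum_{i\in S_C}W_i(T).
\]
Note that $S_C\neq\emptyset$, because $C$ is nonempty and $C\subseteq\Delta_m$ forces some component to be positive.

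\textbf{Step 2 (first equivalence and membership of $p^\circ$).} If some $p\in C\cap X_T$ exists, the upper bound gives
\[
O\prec W(p,T)\preceq \Big(\max_{i\in S_C} p_i\Big)\sum_{i\in S_C}W_i(T),
\]
so $\sum_{i\in S_C}W_i(T)\succ O$. Conversely, if $\sum_{i\in S_C}W_i(T)\succ O$, the lower bound at $p^\circ$ with $\min_{i\in S_C}p^\circ_i>0$ gives $W(p^\circ,T)\succ O$. Thus $p^\circ\in C\cap X_T$, which proves both the converse direction and the final assertion.

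\textbf{Step 3 (second equivalence).} Observe that
\[
\sum_{i\in S_C}W_i(T)=\int_0^T \e^{At}B_{S_C}B_{S_C}^\top\e^{A^\top t}\,\D t,
\]
which is the finite-horizon controllability Gramian of $(A,B_{S_C})$. I will invoke the classical fact that for any $T>0$ this Gramian is positive definite if and only if the pair is controllable. If a self-contained argument is wanted, it goes as follows. A vector $x$ lies in the kernel of the Gramian if and only if $x^\top\e^{At}B_{S_C}\equiv0$ on $[0,T]$. By analyticity, and by differentiating at $t=0$, this holds if and only if $x^\top A^kB_{S_C}=0$ for all $k$. That is exactly failure of the Kalman rank condition.

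\textbf{Main obstacle.} The only nonroutine point is Step 1, namely that the relative interior point has full support on $S_C$. The extension argument along the line through $q$ and $p^\circ$ handles it. Everything else is monotonicity of the positive semidefinite order and the standard Gramian criterion.
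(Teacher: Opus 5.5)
Your proposal is correct and follows essentially the same route as the paper's proof. It uses the extension $p^\circ+\epsilon(p^\circ-q)\in C$ to show that $p^\circ$ has full support on $S_C$, the Loewner-order bounds $W(p,T)\preceq\sum_{i\in S_C}W_i(T)$ and $W(p^\circ,T)\succeq\bigl(\min_{i\in S_C}p^\circ_i\bigr)\sum_{i\in S_C}W_i(T)$ for the first equivalence and the membership $p^\circ\in C\cap X_T$, and the identification of $\sum_{i\in S_C}W_i(T)$ as the controllability Gramian of $(A,B_{S_C})$ for the second equivalence, with only cosmetic differences (a direct inequality instead of a contradiction in Step~1, the factor $\max_{i\in S_C}p_i$ instead of $p_i\le 1$, and an optional sketch of the Gramian criterion).
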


\begin{proof}
See Appendix~\ref{app:feasibility_and_candidate_exclusion}.
\qed
\end{proof}

Lemma~\ref{lem:general_convex_feasibility} shows that feasibility depends
on the allocation constraint $C$ only through $S_C$. Specifically, the
scoring problem~\eqref{prob:generalized_controllability_scoring} is feasible
if and only if $(A,B_{S_C})$ is controllable. When this condition holds,
every point in $\operatorname{ri}C$ belongs to $C\cap X_T$ and is therefore
a feasible initialization for Algorithm~\ref{alg:PG_general}. Otherwise,
$W(p,T)$ is singular for every $p\in C$, and hence
$C\cap X_T=\emptyset$.

\subsection{Differential formulas, convexity, and Hessian structure}
\label{sec:generalized_CS_differential_properties}

We collect the differential identities needed for the
projected-gradient and feasible-direction curvature analyses.
They follow from standard matrix differential formulas applied to the
linear Gramian map $W(p,T)=\Phi_T(p)$ and reduce to the formulas
established in the conventional node-wise controllability-scoring
setting \cite{sato2024scores,sato2025uniqueness}.
A short derivation is included in
Appendix~\ref{app:controllability_scoring_global_convergence}
for completeness, in particular to establish the Hessian-kernel
characterization used below.

\begin{lemma}
\label{lem:generalized_CS_differential_formulas}
Let $p\in X_T$ and $v\in\Real^m$, and set
\begin{align}
W:=W(p,T),
\qquad
H:=\Phi_T(v).
\end{align}
The first directional derivatives are
\begin{align}
{\rm D} f_{\rm V}(p)[v]
&=
-\tr\left(W^{-1}H\right),
\label{eq:generalized_VCS_first_derivative}\\
{\rm D} f_{\rm A}(p)[v]
&=
-\tr\left(W^{-1}HW^{-1}\right).
\label{eq:generalized_AECS_first_derivative}
\end{align}
The corresponding second directional derivatives are
\begin{align}
v^\top\nabla^2 f_{\rm V}(p)v
&=
\left\|
W^{-1/2}HW^{-1/2}
\right\|_{\rm F}^2,
\label{eq:generalized_VCS_second_derivative}\\
v^\top\nabla^2 f_{\rm A}(p)v
&=
2\tr\left(
\left(W^{-1}H\right)^2W^{-1}
\right).
\label{eq:generalized_AECS_second_derivative}
\end{align}
 Moreover, for either $f\in\{f_{\rm V},f_{\rm A}\}$,
\begin{align}
v^\top\nabla^2 f(p)v=0
\quad\Leftrightarrow\quad
\Phi_T(v)=O.
\label{eq:generalized_Hessian_kernel}
\end{align}
\end{lemma}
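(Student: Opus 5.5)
The plan is to reduce everything to the matrix function $t\mapsto W+tH$. Since $\Phi_T$ is linear and $W(p,T)=\Phi_T(p)$, we have $W(p+tv,T)=W+tH$. This matrix stays positive definite for small $|t|$ because $X_T$ is open. Hence ${\rm D} f(p)[v]=\frac{\D}{\D t}f(p+tv)|_{t=0}$ and $v^\top\nabla^2 f(p)v=\frac{\D^2}{\D t^2}f(p+tv)|_{t=0}$, where the second identity uses the twice continuous differentiability of $f$ on $X_T$. The only matrix-calculus facts needed are the standard ones:
\begin{align}
\frac{\D}{\D t}(W+tH)^{-1}&=-(W+tH)^{-1}H(W+tH)^{-1},\\
\frac{\D}{\D t}\log\det(W+tH)&=\tr\bigl((W+tH)^{-1}H\bigr).
\end{align}

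For $f_{\rm V}$, the second fact gives the first derivative $-\tr(W^{-1}H)$ at $t=0$. Differentiating once more, with the first fact, gives $\tr(W^{-1}HW^{-1}H)$. Writing $M:=W^{-1/2}HW^{-1/2}$, which is symmetric, cyclicity of the trace turns this into $\tr(M^2)=\|M\|_{\rm F}^2$.

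For $f_{\rm A}$, the first fact and linearity of the trace give the first derivative $-\tr(W^{-1}HW^{-1})$. Differentiating again by the product rule gives $2\tr(W^{-1}HW^{-1}HW^{-1})$. Cyclicity rewrites this as $2\tr((W^{-1}H)^2W^{-1})$.

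For the kernel characterization, the direction ``$\Leftarrow$'' is immediate: if $H=O$, both second-derivative formulas vanish. For ``$\Rightarrow$'' in the VCS case, $\|M\|_{\rm F}=0$ forces $M=O$, and since $W^{1/2}$ is invertible, $H=W^{1/2}MW^{1/2}=O$. For AECS, I will rewrite the quadratic form as
\begin{align}
2\tr\bigl((W^{-1}H)^2W^{-1}\bigr)=2\tr\bigl(W^{-1/2}\,(W^{-1/2}M)^{\top}(W^{-1/2}M)\,W^{-1/2}\bigr),
\end{align}
or more cleanly as $2\|W^{-1/2}MW^{-1/2}\|_{\rm F}^2$ after regrouping. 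The latter form can be checked by expanding $W^{-1}HW^{-1}HW^{-1}=W^{-1/2}(MW^{-1}M)W^{-1/2}$, which gives $\tr(MW^{-1}M W^{-1})$, and this equals $\|W^{-1/2}MW^{-1/2}\|_{\rm F}^2$ because $W^{-1/2}MW^{-1/2}$ is symmetric. Vanishing then forces $W^{-1/2}MW^{-1/2}=O$, hence $M=O$ and $H=O$.

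The main point requiring care is this AECS rewriting as a squared Frobenius norm of a congruence of $H$. It is what makes the kernel statement, and nonnegativity of the Hessian (hence convexity), transparent. The remaining steps are routine differentiation.
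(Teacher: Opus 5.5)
Your differentiation steps, the VCS identity $\tr(W^{-1}HW^{-1}H)=\|M\|_{\rm F}^2$, and the VCS kernel argument are correct and match the paper's proof. The problem is the AECS rewriting, which you yourself identify as the step requiring care. The expansion $W^{-1}HW^{-1}HW^{-1}=W^{-1/2}(MW^{-1}M)W^{-1/2}$ is false. Substituting $H=W^{1/2}MW^{1/2}$ gives
\[
W^{-1}HW^{-1}HW^{-1}=W^{-1/2}M\bigl(W^{1/2}W^{-1}W^{1/2}\bigr)MW^{-1/2}=W^{-1/2}M^2W^{-1/2},
\]
so the middle factor is $I$, not $W^{-1}$. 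Both of your proposed forms, $2\tr\bigl(W^{-1/2}MW^{-1}MW^{-1/2}\bigr)$ and $2\|W^{-1/2}MW^{-1/2}\|_{\rm F}^2$, equal $2\tr(W^{-2}HW^{-2}H)$. That is not the AECS second derivative $2\tr(W^{-1}HW^{-1}HW^{-1})$. The scalar case $W=w>0$, $H=h\neq0$ already shows this: the second derivative of $t\mapsto1/(w+th)$ at $t=0$ is $2h^2/w^3$, whereas your expression gives $2h^2/w^4$. As written, the argument therefore does not establish the AECS implication $v^\top\nabla^2 f_{\rm A}(p)v=0\Rightarrow \Phi_T(v)=O$, nor the nonnegativity you wanted to read off from the Frobenius form.

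The repair takes one line and is exactly the paper's route. From the correct expansion and the symmetry of $M$,
\[
2\tr\bigl((W^{-1}H)^2W^{-1}\bigr)=2\tr(M^2W^{-1})=2\tr\bigl((MW^{-1/2})^\top(MW^{-1/2})\bigr)=2\|MW^{-1/2}\|_{\rm F}^2 .
\]
This is nonnegative, and it vanishes if and only if $MW^{-1/2}=O$. Since $W^{-1/2}$ is invertible, that holds if and only if $M=O$, and hence if and only if $H=O$. With this substitution your proof coincides with the paper's, which sets $S:=W^{-1/2}HW^{-1/2}$ (your $M$) and writes the AECS form as $2\|SW^{-1/2}\|_{\rm F}^2$.
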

\begin{proof}
See Appendix~\ref{app:controllability_scoring_global_convergence}.
\qed
\end{proof}

Lemma~\ref{lem:generalized_CS_differential_formulas} implies that
the objective functions $f_V$ and $f_A$ are infinitely differentiable
and convex on $X_T$.
The convexity conclusion follows from the nonnegativity of the
second directional derivatives. 
The kernel characterization in
\eqref{eq:generalized_Hessian_kernel} will be used below to relate
positive curvature along feasible allocation directions to restricted
injectivity of the Gramian map.
Taking $v=e_i^{(m)}$, where $e_i^{(m)}$ denotes the $i$th standard
basis vector of $\Real^m$, gives
$\Phi_T(e_i^{(m)})=W_i(T)$. Hence, the gradient components required by
the projected-gradient method are
\begin{align}
\left(\nabla f_{\rm V}(p)\right)_i
&=
-\tr
\left(
W(p,T)^{-1}W_i(T)
\right),
\label{eq:generalized_VCS_gradient}\\
\left(\nabla f_{\rm A}(p)\right)_i
&=
-\tr
\left(
W(p,T)^{-1}
W_i(T)
W(p,T)^{-1}
\right).
\label{eq:generalized_AECS_gradient}
\end{align}

\subsection{Boundary blow-up and global convergence}
\label{sec:generalized_CS_boundary_blowup}

For the two matrix objectives, boundary blow-up follows from the spectral
behavior of the controllability Gramian. In particular, approaching the
boundary of $X_T$ forces the smallest eigenvalue of $W(p,T)$ to vanish,
while compactness of $C$ prevents its remaining eigenvalues from becoming
unbounded.

\begin{lemma}
\label{lem:generalized_CS_boundary_blowup}
Let $C\subseteq\Delta_m$ be nonempty, compact, and convex, and suppose that
$C\cap X_T\neq\emptyset$.
Then $f_{\rm V}$ and $f_{\rm A}$ satisfy
Assumption~\ref{ass:blowup} with $U=X_T$.
\end{lemma}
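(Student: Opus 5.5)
I will prove the sequential form \eqref{eq:sequential_boundary_blowup}. Its contrapositive gives Assumption~\ref{ass:blowup}: if $\dist(L_\alpha,X_T^c)=0$ for some nonempty $L_\alpha$, there is a sequence in $L_\alpha$ whose distance to $X_T^c$ tends to zero while $f\le\alpha$ along it.

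So let $\{p^{(j)}\}\subset C\cap X_T$ with $\dist(p^{(j)},X_T^c)\to0$. Pick $z^{(j)}\in X_T^c$ with $\|p^{(j)}-z^{(j)}\|\to0$. By definition of $X_T$, $\lambda_{\min}(\Phi_T(z^{(j)}))\le0$. Since $\Phi_T$ is linear, we have $\|\Phi_T(p)-\Phi_T(z)\|_2\le c\|p-z\|$ with $c:=\sum_i\|W_i(T)\|_2$. Weyl's inequality then gives $\lambda_{\min}(W(p^{(j)},T))\le c\|p^{(j)}-z^{(j)}\|\to0$. Eigenvalues also stay positive because $p^{(j)}\in X_T$.

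For the upper bound, $p\in C\subseteq\Delta_m$ gives $W(p,T)\preceq\sum_i W_i(T)$. Hence $\lambda_{\max}(W(p^{(j)},T))\le M:=\lambda_{\max}(\sum_iW_i(T))$, uniformly in $j$, and every eigenvalue of $W(p^{(j)},T)$ lies in $(0,M]$.

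The two objectives then follow directly.
\begin{itemize}
\item[] For $f_{\rm V}$: $f_{\rm V}(p^{(j)})=-\sum_k\log\lambda_k\ge-\log\lambda_{\min}-(n-1)\log M\to+\infty$.
\item[] For $f_{\rm A}$: $f_{\rm A}(p^{(j)})\ge1/\lambda_{\min}\to+\infty$.
\end{itemize}
If $M$ happens to be below $1$, the $\log M$ term only adds a constant, so the argument is unaffected.

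Finally, I will make the equivalence with Assumption~\ref{ass:blowup} explicit, since it was only asserted informally in the text. Suppose $L_\alpha\ne\emptyset$ and $\dist(L_\alpha,X_T^c)=0$. Choose $p^{(j)}\in L_\alpha$ with $\dist(p^{(j)},X_T^c)<1/j$. By the argument above, $f(p^{(j)})\to\infty$, which contradicts $f(p^{(j)})\le\alpha$. The edge case $X_T^c=\emptyset$ is trivial. It cannot arise anyway, since $0\notin X_T$.

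The main technical point, though a mild one, is the uniform upper eigenvalue bound. Without it, a blow-up of other eigenvalues could offset $-\log\lambda_{\min}$ for VCS. The simplex constraint supplies this bound. The hypothesis $C\cap X_T\ne\emptyset$ is used only so that the assumption is nonvacuous.
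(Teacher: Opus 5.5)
Your proof is correct and has the same outline as the paper's: the sequential form, $\lambda_{\min}\bigl(W(p^{(j)},T)\bigr)\to0$, a uniform upper eigenvalue bound, and then the lower bounds on $f_{\rm V}$ and $f_{\rm A}$. You justify the two intermediate steps differently, however.

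The paper gets $\lambda_{\min}\to0$ by contradiction. It extracts a subsequence converging to some $\bar p\in C$ and uses closedness of $X_T^c$ together with continuity of $\lambda_{\min}(W(\cdot,T))$. It gets the upper bound $\beta$ from compactness of $C$ and continuity of $W$. You instead combine the Lipschitz bound for the linear map $\Phi_T$ with Weyl's inequality to obtain $\lambda_{\min}(W(p,T))\le c\,\dist(p,X_T^c)$ for every $p\in X_T$. You then read off $M=\lambda_{\max}\bigl(\sum_iW_i(T)\bigr)$ directly from $C\subseteq\Delta_m$.

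What your route buys is quantitativeness. For $p\in C\cap X_T$ it gives $\dist(p,X_T^c)\ge 1/\bigl(c\,f_{\rm A}(p)\bigr)$ and $\dist(p,X_T^c)\ge \exp\bigl(-f_{\rm V}(p)\bigr)M^{-(n-1)}/c$. This yields an explicit lower bound on the separation $d$ of Lemma~\ref{lem:sublevel_geometry}, and hence on $r$ and $\underline\alpha$. It also verifies Assumption~\ref{ass:blowup} directly, without the sequential reformulation.

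The paper's argument is softer but uses only compactness and continuity. It would therefore survive a nonlinear continuous dependence of the Gramian on the allocation.
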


\begin{proof}
See Appendix~\ref{app:controllability_scoring_global_convergence}.
\qed
\end{proof}

The preceding results allow the general open-domain theory to be applied to
the generalized controllability-scoring problem.

\begin{theorem}[Existence and global convergence]
\label{thm:generalized_CS_global_convergence}
Let $C\subseteq\Delta_m$ be nonempty, compact, and convex, and suppose that
$C\cap X_T\neq\emptyset$. For either
$f\in\{f_{\rm V},f_{\rm A}\}$, problem
\eqref{prob:generalized_controllability_scoring} has an optimal solution.

Moreover, Algorithm~\ref{alg:PG_general} is well-defined for every
initialization $p^{(0)}\in C\cap X_T$, and all accepted iterates remain in
the compact initial sublevel set contained in $C\cap X_T$. With
$\varepsilon=0$, the algorithm either terminates at an optimal solution or
generates an infinite sequence that converges to an optimal solution,
without requiring uniqueness. In the infinite case, the best-iterate stationarity estimate \eqref{eq:global_stationarity_complexity}, 
 the objective-residual estimate
\eqref{eq:global_objective_complexity}, and the whole-sequence residual convergence
\eqref{eq:gradient_mapping_vanishes} hold with $U=X_T$.
\end{theorem}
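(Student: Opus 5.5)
This theorem follows by checking that problem~\eqref{prob:generalized_controllability_scoring} is an instance of problem~\eqref{prob:general} with $U=X_T$ satisfying Assumptions~\ref{ass:set}--\ref{ass:blowup}. Once that is done, the claims follow from Corollary~\ref{thm:existence}, Theorem~\ref{thm:well_defined}, Theorem~\ref{thm:global_convergence}, and Corollary~\ref{cor:full_sequence_convergence}.

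The verification has three steps.

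\textbf{Assumption~\ref{ass:set}.} $C$ is nonempty, compact, and convex by hypothesis. $X_T$ is open and convex because it is the preimage of the open convex cone of positive-definite matrices under the linear map $\Phi_T$; this was noted after \eqref{eq:generalized_positive_definiteness_domain}. Finally, $C\cap X_T\neq\emptyset$ is assumed.

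\textbf{Assumption~\ref{ass:objective_regularity}.} On $X_T$, $p\mapsto W(p,T)$ is affine and $W(p,T)\succ O$. The maps $W\mapsto-\log\det W$ and $W\mapsto\tr(W^{-1})$ are $C^\infty$ on the positive-definite cone, so $f_{\rm V}$ and $f_{\rm A}$ are $C^\infty$, in particular twice continuously differentiable. Convexity follows from the nonnegative second directional derivatives in Lemma~\ref{lem:generalized_CS_differential_formulas}. For \eqref{eq:generalized_VCS_second_derivative} this is a squared norm. For \eqref{eq:generalized_AECS_second_derivative} it equals $2\|W^{-1/2}HW^{-1}\|_{\rm F}^2\ge0$, since $\tr(W^{-1}HW^{-1}HW^{-1})=\tr\bigl((W^{-1/2}HW^{-1})(W^{-1/2}HW^{-1})^\top\bigr)$. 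Nonnegative second derivatives on the open convex set $X_T$ give convexity.

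\textbf{Assumption~\ref{ass:blowup}.} This is exactly Lemma~\ref{lem:generalized_CS_boundary_blowup}.

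With the assumptions in place, the conclusions follow in order. Existence of an optimal solution is Corollary~\ref{thm:existence}. For any $p^{(0)}\in C\cap X_T$, Theorem~\ref{thm:well_defined} gives finite backtracking and confines all accepted iterates to $\mathcal F^{(0)}$. By Lemma~\ref{lem:sublevel_geometry}, $\mathcal F^{(0)}$ is compact and contained in $C\cap X_T$ by definition. The dichotomy (termination at an optimum, or convergence of the whole sequence to an optimum) is Corollary~\ref{cor:full_sequence_convergence}. No uniqueness is needed there, because that corollary uses only Assumptions~\ref{ass:set}--\ref{ass:blowup}. In the infinite case, the estimates \eqref{eq:global_stationarity_complexity}, \eqref{eq:global_objective_complexity}, and \eqref{eq:gradient_mapping_vanishes} are Theorem~\ref{thm:global_convergence} with $U=X_T$. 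That theorem requires $p^\star$ to be an optimal solution, which exists by the first step.

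The only nontrivial ingredient is the boundary blow-up property, and it is supplied by Lemma~\ref{lem:generalized_CS_boundary_blowup}. Apart from that, the one point needing care is the convexity of $f_{\rm A}$: one must recognize \eqref{eq:generalized_AECS_second_derivative} as a squared Frobenius norm, or appeal to the standard convexity of $\tr(W^{-1})$ composed with an affine map. The rest is bookkeeping, chiefly checking that $\varepsilon=0$ is the setting used in the cited results.
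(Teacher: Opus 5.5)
Your proposal is correct and follows essentially the same route as the paper's proof. You verify Assumptions~\ref{ass:set}--\ref{ass:blowup} with $U=X_T$ (openness and convexity of $X_T$, smoothness and convexity via Lemma~\ref{lem:generalized_CS_differential_formulas}, where your identity $2\|W^{-1/2}HW^{-1}\|_{\rm F}^2$ matches the paper's $2\|SW^{-1/2}\|_{\rm F}^2$, and blow-up via Lemma~\ref{lem:generalized_CS_boundary_blowup}), and then invoke Corollary~\ref{thm:existence}, Theorem~\ref{thm:well_defined}, Corollary~\ref{cor:full_sequence_convergence}, and Theorem~\ref{thm:global_convergence}.
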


\begin{proof}
See Appendix~\ref{app:controllability_scoring_global_convergence}.
\qed
\end{proof}

\subsection{Restricted injectivity and linear convergence}
\label{sec:generalized_CS_restricted_injectivity}

For the curvature-based uniqueness and convergence analysis, only directions
in
$\mathcal L_C=\lin(C-C)$ are relevant, because every displacement between two feasible
allocations belongs to $\mathcal L_C$.
The Hessian kernel characterization
\eqref{eq:generalized_Hessian_kernel} shows that positive curvature along
every nonzero direction in $\mathcal L_C$ is equivalent to injectivity of
the Gramian map on this direction space. This motivates the following
condition.

\begin{condition}
\label{cond:generalized_CS_restricted_injectivity}
The Gramian map $\Phi_T$ is injective on $\mathcal L_C$:
\begin{align}
\Ker\Phi_T\cap\mathcal L_C=\{0\}.
\label{eq:generalized_restricted_injectivity_condition}
\end{align}
Equivalently, $\Phi_T(v)\neq O$ for every nonzero
$v\in\mathcal L_C$.
\end{condition}

This condition is weaker than injectivity of $\Phi_T$ on $\Real^m$ because
only differences between feasible allocations need to be distinguished.
It gives pointwise positive curvature along $\mathcal L_C$; the next lemma
quantifies this curvature by explicit uniform lower bounds.

\begin{lemma}
\label{lem:generalized_CS_uniform_curvature}
Suppose that
Condition~\ref{cond:generalized_CS_restricted_injectivity} holds and that
$C\cap X_T\neq\emptyset$ and $\mathcal L_C\neq\{0\}$. Define
\begin{align}
\kappa_{C,T}
&:=
\min_{\substack{v\in\mathcal L_C\\ \|v\|=1}}
\|\Phi_T(v)\|_{\rm F}, \label{eq:explicit_curvature_parameters1}
\\
\beta_T
&:=
\max_{p\in C}
\lambda_{\max}\bigl(W(p,T)\bigr).
\label{eq:explicit_curvature_parameters2}
\end{align}
Then $\kappa_{C,T}>0$, and, for every $p\in C\cap X_T$ and
$v\in\mathcal L_C$,
\begin{align}
v^\top\nabla^2 f_{\rm V}(p)v
&\geq
\mu_{\rm V}\|v\|^2,
&
\mu_{\rm V}
&:=
\frac{\kappa_{C,T}^2}{\beta_T^2},
\label{eq:explicit_mu_vcs}\\
v^\top\nabla^2 f_{\rm A}(p)v
&\geq
\mu_{\rm A}\|v\|^2,
&
\mu_{\rm A}
&:=
\frac{2\kappa_{C,T}^2}{\beta_T^3}.
\label{eq:explicit_mu_aecs}
\end{align}
Consequently, $f_{\rm V}$ and $f_{\rm A}$ are strongly convex on
$C\cap X_T$ along the feasible-direction space $\mathcal L_C$, with
strong-convexity parameters $\mu_{\rm V}$ and $\mu_{\rm A}$,
respectively.
\end{lemma}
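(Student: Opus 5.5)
First I will show $\kappa_{C,T}>0$ by compactness. The set $\{v\in\mathcal L_C : \|v\|=1\}$ is compact and nonempty because $\mathcal L_C\neq\{0\}$. The map $v\mapsto\|\Phi_T(v)\|_{\rm F}$ is continuous since $\Phi_T$ is linear. So the minimum in \eqref{eq:explicit_curvature_parameters1} is attained at some unit $v_0\in\mathcal L_C$. Condition~\ref{cond:generalized_CS_restricted_injectivity} gives $\Phi_T(v_0)\neq O$, hence $\kappa_{C,T}>0$. Homogeneity of $\Phi_T$ then yields $\|\Phi_T(v)\|_{\rm F}\geq\kappa_{C,T}\|v\|$ for all $v\in\mathcal L_C$.

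For $\beta_T$, the maximum in \eqref{eq:explicit_curvature_parameters2} is attained because $C$ is compact and $p\mapsto\lambda_{\max}(W(p,T))$ is continuous. It is positive: $C\cap X_T\neq\emptyset$ gives some $W(p,T)\succ O$. For every $p\in C\cap X_T$ we have $O\prec W\preceq\beta_T I$, hence $W^{-1}\succeq\beta_T^{-1}I$.

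Next I bound the Hessian formulas of Lemma~\ref{lem:generalized_CS_differential_formulas}. Fix $p\in C\cap X_T$ and $v\in\mathcal L_C$, and set $H=\Phi_T(v)$, which is symmetric.

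For VCS, $\|W^{-1/2}HW^{-1/2}\|_{\rm F}^2=\tr(W^{-1}HW^{-1}H)$. I apply the inequality $\tr(XY)\geq\lambda_{\min}(X)\tr(Y)$ for $X\succeq O$ and $Y\succeq O$ twice. First, $Y=HW^{-1}H\succeq O$ gives $\tr(W^{-1}HW^{-1}H)\geq\beta_T^{-1}\tr(HW^{-1}H)$. Second, $\tr(HW^{-1}H)=\tr(W^{-1}H^2)\geq\beta_T^{-1}\|H\|_{\rm F}^2$. Combining, $v^\top\nabla^2 f_{\rm V}(p)v\geq\|H\|_{\rm F}^2/\beta_T^2\geq\mu_{\rm V}\|v\|^2$.

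For AECS, $2\tr((W^{-1}H)^2W^{-1})=2\tr(W^{-1}\,HW^{-1}H\,W^{-1})$. I rewrite this as $2\tr(W^{-2}\,HW^{-1}H)$. With $Y=HW^{-1}H\succeq O$ and $\lambda_{\min}(W^{-2})\geq\beta_T^{-2}$, it is at least $2\beta_T^{-2}\tr(HW^{-1}H)$. Then $\tr(HW^{-1}H)\geq\beta_T^{-1}\|H\|_{\rm F}^2$ as before, which gives $v^\top\nabla^2 f_{\rm A}(p)v\geq 2\kappa_{C,T}^2\beta_T^{-3}\|v\|^2$.

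The only delicate step is the trace inequality. I will justify it by writing $\tr(XY)=\tr(Y^{1/2}XY^{1/2})\geq\lambda_{\min}(X)\tr(Y)$, using $Y^{1/2}XY^{1/2}\succeq\lambda_{\min}(X)Y$. I also need the cyclic rearrangements to place a positive semidefinite factor next to $W^{-1}$ or $W^{-2}$; these are routine.

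Finally, strong convexity along $\mathcal L_C$ follows exactly as in Lemma~\ref{lem:compact_strong_convexity}. Take $p,q\in C\cap X_T$. The segment $p+t(q-p)$, $t\in[0,1]$, stays in $C\cap X_T$ by convexity, and $q-p\in\mathcal L_C$. Integrating the curvature bound twice along this segment via Taylor's formula with integral remainder gives \eqref{eq:general_strong_convexity} with $\mu=\mu_{\rm V}$ or $\mu=\mu_{\rm A}$.
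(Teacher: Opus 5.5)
Your proof is correct and follows essentially the paper's argument. The shared steps are: compactness of the unit sphere in $\mathcal L_C$ gives $\kappa_{C,T}>0$; the Loewner bound $W(p,T)^{-1}\succeq\beta_T^{-1}I$, used inside traces against positive semidefinite matrices together with $\|\Phi_T(v)\|_{\rm F}\geq\kappa_{C,T}\|v\|$, gives both Hessian bounds; and the Taylor integral argument gives strong convexity.

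The differences are cosmetic. The paper routes both Hessian bounds through $S=W^{-1/2}HW^{-1/2}$, using $\|H\|_{\rm F}\leq\lambda_{\max}(W)\|S\|_{\rm F}$ and $\tr(SW^{-1}S)\geq\beta_T^{-1}\|S\|_{\rm F}^2$, whereas you apply $\tr(XY)\geq\lambda_{\min}(X)\tr(Y)$ directly. You also make explicit that $\beta_T$ is attained and positive, which the paper leaves implicit.
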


\begin{proof}
See Appendix~\ref{app:restricted_injectivity_and_linear_convergence}.
\qed
\end{proof}

Lemma~\ref{lem:generalized_CS_uniform_curvature} supplies the uniform strong-convexity condition required
by Theorem~\ref{thm:linear_convergence}. It therefore yields both uniqueness
of the controllability scores and linear convergence of the
domain-aware projected-gradient method.

\begin{theorem}
\label{thm:generalized_CS_linear_convergence}
Let the assumptions of
Theorem~\ref{thm:generalized_CS_global_convergence} hold, and suppose that
Condition~\ref{cond:generalized_CS_restricted_injectivity} is satisfied.
For either $f\in\{f_{\rm V},f_{\rm A}\}$, problem
\eqref{prob:generalized_controllability_scoring} has a unique optimal
solution $p^\star$.

If $\mathcal L_C\neq\{0\}$, then, from every
$p^{(0)}\in C\cap X_T$, Algorithm~\ref{alg:PG_general} with
$\varepsilon=0$ either terminates at $p^\star$ or converges linearly
to $p^\star$. The estimates in Theorem~\ref{thm:linear_convergence} hold
with
\begin{align}
\mu
=
\begin{cases}
\mu_{\rm V} &\text{if}\quad f=f_{\rm V},\\
\mu_{\rm A} &\text{if}\quad  f=f_{\rm A}.
\end{cases}
\end{align}
\end{theorem}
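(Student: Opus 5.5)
The plan is to reduce the statement to Theorem~\ref{thm:linear_convergence} with $U=X_T$, treating uniqueness separately because it must also hold when $\mathcal L_C=\{0\}$.

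\emph{Standing assumptions.} Assumption~\ref{ass:set} holds because $C$ is nonempty, compact and convex, $X_T$ is open and convex (inverse image of the positive-definite cone under the linear map $\Phi_T$), and $C\cap X_T\neq\emptyset$ by hypothesis. Assumption~\ref{ass:objective_regularity} follows from Lemma~\ref{lem:generalized_CS_differential_formulas}: $f_{\rm V},f_{\rm A}$ are smooth on $X_T$ with nonnegative second directional derivatives. Assumption~\ref{ass:blowup} is Lemma~\ref{lem:generalized_CS_boundary_blowup}. Hence Corollary~\ref{thm:existence} gives existence of an optimal solution.

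\emph{Uniqueness.} If $\mathcal L_C=\{0\}$, then $C$ is a single point, so the feasible set $C\cap X_T$ is a singleton and the optimum is trivially unique. If $\mathcal L_C\neq\{0\}$, Lemma~\ref{lem:generalized_CS_uniform_curvature} applies because Condition~\ref{cond:generalized_CS_restricted_injectivity} holds. It gives $v^\top\nabla^2 f(p)v\geq\mu\|v\|^2$ with $\mu\in\{\mu_{\rm V},\mu_{\rm A}\}$, for all $p\in C\cap X_T\supseteq\mathcal F^{(0)}$ and all $v\in\mathcal L_C$. In particular \eqref{eq:PD_tangent} holds on $\mathcal F^{(0)}$ for every feasible $p^{(0)}$, since $\mu>0$ follows from $\kappa_{C,T}>0$ and $\beta_T<\infty$. (Here $\beta_T>0$ because any $p\in C\cap X_T$ has $W(p,T)\succ O$.)

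Uniqueness then follows from Theorem~\ref{thm:linear_convergence}. Alternatively, argue directly: for two optima $p_1,p_2$, the segment between them lies in $C\cap X_T$ by convexity, and $p_2-p_1\in\mathcal L_C$. Strict positivity of the second derivative of $t\mapsto f(p_1+t(p_2-p_1))$ then contradicts constancy of $f$ on the segment unless $p_1=p_2$. This direct route also avoids depending on the choice of $p^{(0)}$.

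\emph{Linear convergence.} For $\mathcal L_C\neq\{0\}$ and any $p^{(0)}\in C\cap X_T$, Theorem~\ref{thm:linear_convergence} yields \eqref{eq:linear_iterate_convergence} with the constant $\mu$ of Lemma~\ref{lem:compact_strong_convexity}. That constant is the exact minimum of the curvature over $\mathcal F^{(0)}$, so it dominates the explicit $\mu_{\rm V}$ or $\mu_{\rm A}$.

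The main point requiring care is therefore whether the estimates hold \emph{with} the explicit $\mu$ rather than the minimum curvature. I would check that the proof of Theorem~\ref{thm:linear_convergence} uses $\mu$ only through the lower bound \eqref{eq:uniform_feasible_direction_curvature} and the resulting strong-convexity inequality \eqref{eq:general_strong_convexity} on $\mathcal F^{(0)}$. Both remain valid with any smaller positive constant, and the resulting rate $\gamma=1-\sigma\min\{1,\mu\underline\alpha\}$ is monotone in $\mu$. Replacing $\mu$ by $\mu_{\rm V}$ or $\mu_{\rm A}$ thus keeps $\gamma\in(0,1)$ and keeps the bound valid.

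\emph{Termination case.} If the algorithm terminates, Corollary~\ref{cor:full_sequence_convergence} shows it stops at an optimal solution, which is $p^\star$ by uniqueness.
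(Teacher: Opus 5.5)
Your proposal is correct and follows the paper's route. You verify the standing assumptions, take strong convexity along $\mathcal L_C$ on $C\cap X_T$ with the explicit $\mu_{\rm V}$ or $\mu_{\rm A}$ from Lemma~\ref{lem:generalized_CS_uniform_curvature}, deduce uniqueness, and invoke Theorem~\ref{thm:linear_convergence}; along the way you make explicit two points the paper's short proof leaves implicit, namely the singleton case $\mathcal L_C=\{0\}$ and why the estimates survive replacing the exact minimum curvature by the smaller explicit constant.
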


\begin{proof}
See Appendix~\ref{app:restricted_injectivity_and_linear_convergence}.
\qed
\end{proof}

\begin{remark}
    For the full simplex $C=\Delta_m$, the feasible-direction space is
$\mathcal L_{\Delta_m}
=
\left\{
v\in\Real^m
\ \middle|\
\bm 1^\top v=0
\right\}$.
Hence, Condition~\ref{cond:generalized_CS_restricted_injectivity} means that
\begin{align}
\sum_{i=1}^m v_iW_i(T)=O\quad \text{and}
\quad
\sum_{i=1}^m v_i=0
\label{eq:affine_dependence_gramians}
\end{align}
can hold only when $v=0$. This is precisely the definition of affine
independence of the matrices $W_1(T),\ldots,W_m(T)$, regarded as points in
the vector space of symmetric matrices.
To see the equivalent difference-matrix characterization, fix an arbitrary
index $r\in\{1,\ldots,m\}$. Since
$v_r=-\sum_{i\ne r}v_i$
for every $v\in\mathcal L_{\Delta_m}$, we have
\begin{align}
\sum_{i=1}^m v_iW_i(T)
=
\sum_{i\ne r}v_i\bigl(W_i(T)-W_r(T)\bigr).
\label{eq:affine_independence_difference_form}
\end{align}
Therefore, restricted injectivity on $\mathcal L_{\Delta_m}$ holds if and
only if the $m-1$ difference matrices
\begin{align}
W_i(T)-W_r(T),
\qquad i\ne r,
\end{align}
are linearly independent. This condition is weaker than linear independence
of $W_1(T),\ldots,W_m(T)$ because it excludes only linear dependencies whose
coefficients sum to zero.  
\end{remark}
   
\begin{remark}
  Uniqueness has already been studied for the conventional node-wise setting
$m=n$, $b_i=e_i$, and $C=\Delta_n$. When $A$ is Hurwitz,
\cite[Thm.~4]{sato2024scores} establishes finite-horizon uniqueness of the
VCS and AECS for every $T>0$. For an arbitrary system matrix,
\cite[Thm.~1]{sato2025uniqueness} proves uniqueness for almost every
$T>0$, with the exceptional horizons forming a set of Lebesgue measure zero.
The proofs of these results establish linear independence of the node-wise
Gramian contributions, or equivalently full injectivity of $\Phi_T$. This is
stronger than Condition~\ref{cond:generalized_CS_restricted_injectivity} required on the simplex.  
\end{remark}

\subsection{Candidate exclusion: feasibility and restricted injectivity}
\label{sec:generalized_CS_candidate_exclusion}

The preceding subsections treated an arbitrary nonempty compact convex
allocation set \(C\subseteq\Delta_m\). We now specialize the general
feasibility and restricted-injectivity results to candidate-exclusion
constraints, for which the admissible allocation set is a face of the
simplex. We first characterize whether the retained input directions
preserve feasibility. We then exploit the simplex-face structure to
reformulate the problem over a lower-dimensional simplex and derive an
explicit projection formula. Finally, we characterize Condition~\ref{cond:generalized_CS_restricted_injectivity} by the full column rank of
a reduced Gramian matrix and derive explicit expressions for the two
quantities \(\kappa_{C_S,T}\) and \(\beta_T\) entering the
strong-convexity bounds in Lemma~\ref{lem:generalized_CS_uniform_curvature}.

Candidate exclusion changes both the input family available for actuation
and the feasible allocation-direction space. The former may preserve or
destroy controllability and hence feasibility. The latter may remove
directions along which the Gramian map is unchanged and can therefore
affect restricted injectivity.

Let \(S\subseteq\{1,\ldots,m\}\) be a nonempty set of retained
candidate-channel indices, set \(k:=|S|\), and fix an ordering
\begin{align}
S=\{i_1,\ldots,i_k\}.
\label{ordering}
\end{align}
Channels outside \(S\) are excluded by fixing their allocations to zero,
so the admissible allocation set is the simplex face
\begin{align}
C_S
:=
\left\{
p\in\Delta_m
\ \middle|\
p_i=0
\text{ for every }i\notin S
\right\}.
\label{eq:candidate_exclusion_face}
\end{align}
The corresponding retained input directions are collected in
\begin{align}
B_S
:=
\begin{bmatrix}
b_{i_1} & \cdots & b_{i_k}
\end{bmatrix}
\in\Real^{n\times k}.
\label{eq:restricted_input_matrix}
\end{align}

\begin{corollary}
\label{cor:candidate_exclusion_feasibility}
Define the uniform allocation on \(C_S\) by
\begin{align}
p_i^{\rm uni}
:=
\begin{cases}
1/k, & i\in S,\\
0,   & i\notin S.
\end{cases}
\label{eq:uniform_allocation_on_face}
\end{align}
For every \(T>0\),
\begin{align}
C_S\cap X_T\neq\emptyset
&\quad\Leftrightarrow\quad
p^{\rm uni}\in X_T
\notag\\
&\quad\Leftrightarrow\quad
\sum_{i\in S}W_i(T)\succ O
\notag\\
&\quad\Leftrightarrow\quad
(A,B_S)\text{ is controllable}.
\label{eq:candidate_exclusion_feasibility_equivalence}
\end{align}
\end{corollary}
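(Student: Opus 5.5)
The plan is to obtain this corollary as a direct specialization of Lemma~\ref{lem:general_convex_feasibility} to $C=C_S$. Two facts have to be checked: $S_{C_S}=S$, and $p^{\rm uni}\in\operatorname{ri}C_S$. Once these hold, Lemma~\ref{lem:general_convex_feasibility} with $p^\circ=p^{\rm uni}$ gives everything. It yields $C_S\cap X_T\neq\emptyset\Leftrightarrow\sum_{i\in S}W_i(T)\succ O\Leftrightarrow(A,B_S)$ controllable, where $B_{S_{C_S}}=B_S$ up to column ordering, which does not affect controllability. It also yields $p^{\rm uni}\in C_S\cap X_T$ whenever these conditions hold.

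First, $C_S$ is a nonempty, compact, convex subset of $\Delta_m$, being the intersection of $\Delta_m$ with finitely many coordinate hyperplanes and containing $p^{\rm uni}$. For $S_{C_S}=S$: every $p\in C_S$ has $p_i=0$ for $i\notin S$, and $p^{\rm uni}_i=1/k>0$ for $i\in S$.

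For relative interiority, note that the affine hull of $C_S$ is $\{p \mid p_i=0\ (i\notin S),\ \sum_{i\in S}p_i=1\}$. Within it, $C_S$ is cut out by the inequalities $p_i\ge0$, $i\in S$, which hold strictly at $p^{\rm uni}$. Hence a small relative ball around $p^{\rm uni}$ stays in $C_S$, so $p^{\rm uni}\in\operatorname{ri}C_S$.

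Finally, to close the chain with the middle condition, I argue both directions. If $p^{\rm uni}\in X_T$, then $C_S\cap X_T\neq\emptyset$ trivially. Conversely, nonemptiness implies $p^{\rm uni}\in X_T$ by the last clause of Lemma~\ref{lem:general_convex_feasibility}. Alternatively, this follows directly from $W(p^{\rm uni},T)=k^{-1}\sum_{i\in S}W_i(T)$, which also shows $p^{\rm uni}\in X_T\Leftrightarrow\sum_{i\in S}W_i(T)\succ O$. There is no real obstacle; the only step needing care is the relative-interior verification, which is routine.
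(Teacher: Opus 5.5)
Your proposal is correct and matches the paper's proof: it specializes Lemma~\ref{lem:general_convex_feasibility} to $C=C_S$, using $S_{C_S}=S$, $p^{\rm uni}\in\operatorname{ri}C_S$, and $W(p^{\rm uni},T)=k^{-1}\sum_{i\in S}W_i(T)$. The only difference is that you spell out the routine checks, such as the relative-interior verification and the irrelevance of column ordering in $B_S$, which the paper asserts without detail.
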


\begin{proof}
See Appendix~\ref{app:feasibility_and_candidate_exclusion}. \qed
\end{proof}

Corollary~\ref{cor:candidate_exclusion_feasibility} characterizes the first
effect of candidate exclusion. The reduced problem is feasible precisely
when \((A,B_S)\) is controllable. Equivalently, feasibility can be verified
by testing whether the uniform allocation \(p^{\rm uni}\) belongs to
\(X_T\). Since controllability of \((A,B_S)\) does not depend on the finite
horizon, the reduced problem is either feasible for every \(T>0\) or
infeasible for every \(T>0\).

We next consider the second effect, namely the change in Condition~\ref{cond:generalized_CS_restricted_injectivity}. For the simplex face \(C_S\), the
feasible-direction space is
\begin{align}
\mathcal L_{C_S}
=
\left\{
v\in\Real^m
\ \middle|\
v_i=0 \text{ for every } i\notin S,\quad
\sum_{i\in S}v_i=0
\right\}.
\label{eq:candidate_exclusion_direction_space}
\end{align}
Thus,
$\mathcal L_{C_S}
\subseteq
\mathcal L_{\Delta_m}$,
and \(\mathcal L_{C_S}\) contains only feasible perturbations supported on
the retained channels. Consequently, candidate exclusion may remove from
the feasible-direction space nonzero directions in
\(\Ker\Phi_T\cap\mathcal L_{\Delta_m}\), along which the Gramian is
unchanged. It is therefore possible that
$\Ker\Phi_T\cap\mathcal L_{C_S}
=
\{0\}$
even when
$\Ker\Phi_T\cap \mathcal L_{\Delta_m}
\neq
\{0\}$.

To obtain a computable characterization of
Condition~\ref{cond:generalized_CS_restricted_injectivity}, we introduce
reduced coordinates for the feasible-direction space
\(\mathcal L_{C_S}\) in
\eqref{eq:candidate_exclusion_direction_space}.
Using the ordering \eqref{ordering}, define
\begin{align}
E_S
:=
\begin{bmatrix}
e_{i_1}^{(m)} & \cdots & e_{i_k}^{(m)}
\end{bmatrix}
\in\Real^{m\times k},
\label{eq:candidate_selection_matrix}
\end{align}
where \(e_i^{(m)}\) denotes the \(i\)th standard basis vector of
\(\Real^m\).
The matrix \(E_S\) embeds the retained allocation coordinates into
\(\Real^m\) by placing them at the indices in \(S\) and setting all
excluded coordinates to zero.
This reduced-coordinate representation converts
Condition~\ref{cond:generalized_CS_restricted_injectivity} into a
matrix-rank condition and expresses the feasible-direction separation
constant as a minimum singular value.

The following lemma collects the corresponding reduced-coordinate
identities.

\begin{lemma}
\label{lem:candidate_exclusion_reduced_coordinates}
The following statements hold.

\begin{enumerate}
\item
The map $q\mapsto E_Sq$ is a bijection from $\Delta_k$ onto $C_S$.
In particular, every $p\in C_S$ has the unique representation
\begin{align}
p=E_Sq,
\qquad
q\in\Delta_k.
\label{eq:reduced_coordinate_parameterization}
\end{align}

\item
For $q\in\Delta_k$, the Gramian map $\Phi_T$ is
\begin{align}
\Phi_T(E_Sq)
=
\sum_{j=1}^k q_jW_{i_j}(T).
\label{eq:reduced_coordinate_Gramian}
\end{align}

\item
The Euclidean projection onto $C_S$ satisfies
\begin{align}
P_{C_S}(z)
=
E_SP_{\Delta_k}\left(E_S^\top z\right)
\label{eq:projection_onto_candidate_face}
\end{align}
for every $z\in\Real^m$.

\item
The feasible-direction space is
\begin{align}
\mathcal L_{C_S}
=
\left\{
E_Sz
\ \middle|\
z\in\Real^k,\ \bm 1^\top z=0
\right\}.
\label{eq:reduced_feasible_direction_space}
\end{align}
\end{enumerate}
\end{lemma}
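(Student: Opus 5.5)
The plan is to verify the four items in order, relying only on two elementary facts about $E_S$. First, $E_S^\top E_S=I_k$. Second, $E_SE_S^\top$ is the diagonal coordinate projector onto the coordinates indexed by $S$. Its range is the subspace $V_S:=\{p\in\Real^m\mid p_i=0\ \text{for all } i\notin S\}$, and it acts as the identity on $V_S$.

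For item 1, take $q\in\Delta_k$. Then $E_Sq$ has entries $q_j$ at the indices $i_j$ and zeros elsewhere. It is therefore nonnegative, vanishes outside $S$, and its entries sum to $\bm 1^\top q=1$, so $E_Sq\in C_S$. Injectivity follows from $E_S^\top E_S=I_k$. For surjectivity, take $p\in C_S$ and set $q:=E_S^\top p=(p_{i_1},\ldots,p_{i_k})^\top$. This $q$ lies in $\Delta_k$ because the entries of $p$ outside $S$ are zero, and $E_Sq=E_SE_S^\top p=p$ since $p\in V_S$.

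Item 2 is immediate from the definition \eqref{eq:generalized_Gramian_map}. The only nonzero coordinates of $E_Sq$ are $(E_Sq)_{i_j}=q_j$, so $\Phi_T(E_Sq)=\sum_{j}q_jW_{i_j}(T)$.

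Item 3 is the only step requiring an actual argument. Fix $z\in\Real^m$ and write $z=E_SE_S^\top z+z_\perp$, where $z_\perp$ is supported off $S$. For any $p=E_Sq\in C_S$ the two pieces are orthogonal, which gives
\begin{align}
\|z-E_Sq\|^2=\|E_S^\top z-q\|^2+\|z_\perp\|^2 .
\end{align}
Here $\|E_S(E_S^\top z-q)\|=\|E_S^\top z-q\|$ because $E_S^\top E_S=I_k$. The term $\|z_\perp\|^2$ does not depend on $q$. Hence, by item 1, minimizing $\|z-p\|$ over $p\in C_S$ is the same as minimizing $\|E_S^\top z-q\|$ over $q\in\Delta_k$. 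That minimizer is unique and equals $P_{\Delta_k}(E_S^\top z)$. Mapping it back through $E_S$ yields \eqref{eq:projection_onto_candidate_face}.

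For item 4, use item 1 to write $C_S-C_S=\{E_S(q-q')\mid q,q'\in\Delta_k\}$. By linearity of $E_S$, this gives $\mathcal L_{C_S}=E_S\,\lin(\Delta_k-\Delta_k)$. It then remains to show $\lin(\Delta_k-\Delta_k)=\{z\in\Real^k\mid\bm 1^\top z=0\}$. The inclusion $\subseteq$ holds because every difference of two points of $\Delta_k$ has zero sum. For $\supseteq$, note that the vectors $e_j^{(k)}-e_1^{(k)}$, $j=2,\ldots,k$, are differences of vertices of $\Delta_k$ and span the $(k-1)$-dimensional zero-sum subspace. When $k=1$, both sides are $\{0\}$. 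This also agrees with the description \eqref{eq:candidate_exclusion_direction_space}.

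None of these steps is a real obstacle. The only place needing care is the orthogonal splitting in item 3.
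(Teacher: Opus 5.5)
Your proposal is correct and follows essentially the same route as the paper: coordinate embedding for items 1 and 2, the orthogonal splitting $z-E_Sq=E_S(E_S^\top z-q)+(I-E_SE_S^\top)z$ with the Pythagorean identity for item 3, and the identification $\mathcal L_{C_S}=E_S\,\lin(\Delta_k-\Delta_k)$ for item 4. Your explicit injectivity and surjectivity check in item 1, and the spanning vectors $e_j^{(k)}-e_1^{(k)}$ in item 4, fill in details that the paper only asserts.
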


\begin{proof}
See Appendix~\ref{app:feasibility_and_candidate_exclusion}. \qed
\end{proof}

Lemma~\ref{lem:candidate_exclusion_reduced_coordinates} shows that the
candidate-exclusion problem \eqref{prob:generalized_controllability_scoring} with $C=C_S$ can be solved directly over the
lower-dimensional simplex $\Delta_k$, without retaining the coordinates
fixed at zero. Algorithm~\ref{alg:PG_general} therefore requires only a
simplex projection in $\Real^k$, followed by the embedding through $E_S$.
All excluded coordinates remain identically zero. The projection
$P_{\Delta_k}$ can be computed exactly in finite time by standard
dedicated algorithms \cite{condat2016simplex}.

The same reduction also provides a convenient starting point for testing
Condition~\ref{cond:generalized_CS_restricted_injectivity}. Under the identification of \(C_S\) with \(\Delta_k\) through
\(p=E_Sq\), the feasible-direction space
\(\mathcal L_{C_S}\) is identified with the zero-sum subspace
\begin{align}
\left\{
z\in\Real^k
\ \middle|\
\bm 1^\top z=0
\right\}. \label{z_subspace}
\end{align}
This subspace still carries one linear equality constraint. To eliminate
that constraint and represent every feasible direction by an unconstrained
coordinate vector, we introduce an orthonormal basis of the zero-sum
subspace.

If \(k=1\), then \(C_S\) is a singleton and
\(\mathcal L_{C_S}=\{0\}\), so no restricted-injectivity or curvature test
is required. Suppose henceforth that \(k\geq2\), and let
the columns of
\(Z_S\in\Real^{k\times(k-1)}\) form an orthonormal basis of the
zero-sum subspace \eqref{z_subspace}.
Equivalently,
\begin{align}
Z_S^\top Z_S
&=
I_{k-1},
&
\operatorname{Im}Z_S
&=
\left\{
z\in\Real^k
\ \middle|\
\bm 1^\top z=0
\right\}.
\label{eq:ZS_zero_sum_image}
\end{align}
Every vector \(z\in\Real^k\) in \eqref{z_subspace} then has a unique representation
\begin{align}
z=Z_Sy,
\qquad
y\in\Real^{k-1}. \label{unique_z}
\end{align}
Substituting this representation
into \eqref{eq:reduced_feasible_direction_space} gives
\begin{align}
\mathcal L_{C_S}
=
\left\{
E_SZ_Sy
\ \middle|\
y\in\Real^{k-1}
\right\}.
\label{eq:reduced_feasible_direction_parameterization}
\end{align}
Thus, every feasible direction \(v\in\mathcal L_{C_S}\) has the unique
representation
\begin{align}
v=E_SZ_Sy,
\qquad
y\in\Real^{k-1}. \label{unique_v}
\end{align}
Moreover, since both \(E_S\) and \(Z_S\) have orthonormal columns,
\begin{align}
\|E_SZ_Sy\|
=
\|y\|
\label{eq:reduced_feasible_direction_isometry}
\end{align}
for every \(y\in\Real^{k-1}\).

To express the Gramian change generated by a feasible allocation direction
as a matrix-vector product, let \(\operatorname{vec}\) denote column-wise
vectorization and define
\begin{align}
M_S
:=
\begin{bmatrix}
\operatorname{vec}\left(W_{i_1}(T)\right)
&
\cdots
&
\operatorname{vec}\left(W_{i_k}(T)\right)
\end{bmatrix}
\in\Real^{n^2\times k}.
\label{eq:restricted_Gramian_matrix}
\end{align}
For any \(z\in\Real^k\), linearity of the Gramian map \(\Phi_T\) gives
\begin{align}
\operatorname{vec}\left(\Phi_T(E_Sz)\right)
=
\operatorname{vec}\left(
\sum_{j=1}^k z_jW_{i_j}(T)
\right)=
M_Sz.
\label{eq:Gramian_map_retained_coordinates}
\end{align}
Thus, \(M_S\) represents the Gramian map with respect to perturbations of
the retained allocation coordinates. Such a perturbation is feasible for
the simplex face only when \(\bm 1^\top z=0\). Since every such vector has
the unique representation
\eqref{unique_z},
every feasible direction can be written as
\eqref{unique_v}, and
\eqref{eq:Gramian_map_retained_coordinates} becomes
\begin{align}
\operatorname{vec}\left(\Phi_T(E_SZ_Sy)\right)
=
M_SZ_Sy.
\label{eq:Gramian_map_reduced_coordinates}
\end{align}
Therefore, \(M_SZ_S\) represents the restriction of the Gramian map to the
feasible-direction space \(\mathcal L_{C_S}\), expressed in the
unconstrained coordinates \(y\in\Real^{k-1}\). In particular, Condition~\ref{cond:generalized_CS_restricted_injectivity} is equivalent to
\begin{align}
M_SZ_Sy=0
\quad\Rightarrow\quad
y=0, \label{Condition2}
\end{align}
that is, to \(M_SZ_S\) having full column rank. Moreover, the isometry
\eqref{eq:reduced_feasible_direction_isometry} allows the
feasible-direction separation constant to be expressed as the minimum
singular value of \(M_SZ_S\). The precise characterization is stated next.

\begin{proposition}
\label{prop:computable_restricted_injectivity}
Suppose that \(k\geq2\). Then the following statements hold.

\begin{enumerate}
\item
For every \(y\in\Real^{k-1}\),
\begin{align}
\left\|
\Phi_T(E_SZ_Sy)
\right\|_{\rm F}
=
\|M_SZ_Sy\|.
\label{eq:restricted_injectivity_matrix_test}
\end{align}

\item
The Gramian map \(\Phi_T\) is injective on
\(\mathcal L_{C_S}\) if and only if \(M_SZ_S\) has full column rank.
Equivalently,
\begin{align}
\sigma_{\min}(M_SZ_S)>0.
\label{eq:restricted_injectivity_singular_value_test}
\end{align}

\item
For \(C=C_S\), the constant $\kappa_{C_S,T}$ in \eqref{eq:explicit_curvature_parameters1} satisfies
\begin{align}
\kappa_{C_S,T}
=
\sigma_{\min}(M_SZ_S).
\label{eq:kappa_as_minimum_singular_value}
\end{align}

\item
For \(C=C_S\), the constant $\beta_T$ in \eqref{eq:explicit_curvature_parameters2} satisfies
\begin{align}
\beta_T
=
\max_{i\in S}
\lambda_{\max}\left(W_i(T)\right).
\label{eq:beta_on_simplex_face}
\end{align}
\end{enumerate}
\end{proposition}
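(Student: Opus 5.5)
The four items are handled one at a time. Throughout we use the reduced-coordinate identities of Lemma~\ref{lem:candidate_exclusion_reduced_coordinates} together with the parameterization \eqref{unique_v} and the isometry \eqref{eq:reduced_feasible_direction_isometry}.

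\emph{Items 1 and 2.} For item~1, apply \eqref{eq:Gramian_map_retained_coordinates} with $z=Z_Sy$. This gives $\operatorname{vec}(\Phi_T(E_SZ_Sy))=M_SZ_Sy$. Since the Frobenius norm of a matrix equals the Euclidean norm of its vectorization, \eqref{eq:restricted_injectivity_matrix_test} follows.

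For item~2, recall from \eqref{unique_v} that every $v\in\mathcal L_{C_S}$ is $E_SZ_Sy$ for a unique $y$, and that $v=0$ if and only if $y=0$ by \eqref{eq:reduced_feasible_direction_isometry}. Hence $\Ker\Phi_T\cap\mathcal L_{C_S}=\{0\}$ holds if and only if $M_SZ_Sy=0$ implies $y=0$, which is \eqref{Condition2}. That is exactly full column rank of the $n^2\times(k-1)$ matrix $M_SZ_S$. Full column rank is in turn equivalent to $\sigma_{\min}(M_SZ_S)>0$, where $\sigma_{\min}$ is taken as the $(k-1)$th singular value.

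\emph{Item 3.} Using \eqref{unique_v} and the isometry again, the constraint set $\{v\in\mathcal L_{C_S},\ \|v\|=1\}$ corresponds bijectively to $\{y\in\Real^{k-1},\ \|y\|=1\}$. Item~1 then gives
\begin{align}
\kappa_{C_S,T}=\min_{\|y\|=1}\|M_SZ_Sy\|.
\end{align}
By the variational (Courant--Fischer) characterization, this minimum equals $\lambda_{\min}\bigl((M_SZ_S)^\top M_SZ_S\bigr)^{1/2}=\sigma_{\min}(M_SZ_S)$, because the matrix has at least as many rows as columns. The row count is $n^2\ge k-1$ when full column rank holds. If $k-1>n^2$, the matrix is rank deficient and both sides are $0$, provided $\sigma_{\min}$ is read in the same $(k-1)$th-singular-value convention. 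This convention issue is the only subtle point, and the statement is applied under Condition~\ref{cond:generalized_CS_restricted_injectivity} anyway.

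\emph{Item 4.} This is the only step that needs a genuine argument, and it uses convexity of $\lambda_{\max}$. By Lemma~\ref{lem:candidate_exclusion_reduced_coordinates}, each $p\in C_S$ is $E_Sq$ with $q\in\Delta_k$, and $W(p,T)=\sum_j q_jW_{i_j}(T)$. Since $\lambda_{\max}$ is convex on $\mathbb S^n$ (it is a maximum of the linear functionals $x^\top Xx$ over unit vectors $x$), we get
\begin{align}
\lambda_{\max}(W(p,T))\le\sum_j q_j\lambda_{\max}(W_{i_j}(T))\le\max_{i\in S}\lambda_{\max}(W_i(T)).
\end{align}
For the reverse inequality, note that each vertex $e_i^{(m)}$ with $i\in S$ lies in $C_S$ and gives $W(e_i^{(m)},T)=W_i(T)$. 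The maximum over $C_S$ is therefore attained and equals $\max_{i\in S}\lambda_{\max}(W_i(T))$. No positive definiteness is needed here, since $\beta_T$ is defined as a maximum over all of $C$.
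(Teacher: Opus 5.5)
Your proof is correct and follows the paper's argument essentially step for step. You use vectorization for item~1, the isometric parameterization $v=E_SZ_Sy$ to reduce items~2 and~3 to full column rank and the Rayleigh-quotient characterization of $\sigma_{\min}(M_SZ_S)$, and the bound $\lambda_{\max}(W(p,T))\le\sum_{i\in S}p_i\lambda_{\max}(W_i(T))$ with equality at a vertex of $C_S$ for item~4. Your remark that $\sigma_{\min}$ must be read as the $(k-1)$th singular value when $M_SZ_S$ has more columns than rows is a valid refinement that the paper leaves implicit; because the $W_i(T)$ are symmetric, it can only make a difference when $n=1$ and $k\ge 3$.
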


\begin{proof}
See Appendix~\ref{app:restricted_injectivity_and_linear_convergence}. \qed
\end{proof}

Proposition~\ref{prop:computable_restricted_injectivity} provides both a
binary test and a quantitative measure of restricted injectivity.
Full column rank of \(M_SZ_S\) determines whether Condition~\ref{cond:generalized_CS_restricted_injectivity}
holds, whereas \(\sigma_{\min}(M_SZ_S)\) is the minimum Frobenius-norm
change in the Gramian generated by a unit Euclidean-norm feasible
allocation direction. A small positive value indicates that the Gramian
map is nearly degenerate along some feasible direction, even though it
remains injective. Through
\(\kappa_{C_S,T}=\sigma_{\min}(M_SZ_S)\), this singular value enters
quadratically into the explicit strong-convexity parameters and therefore
quantifies the curvature used in the linear-convergence analysis.


\section{Numerical Experiments}
\label{sec:numerical_experiment}
This section illustrates how candidate exclusion affects feasibility,
restricted injectivity, and the optimal controllability-score allocations
in a directed network.

We consider the ten-node directed network depicted in
Fig.~\ref{fig:network}. Every edge has weight \(0.2\), and
\(A=-L\), where \(L\) is the directed graph Laplacian.
We specialize problem~\eqref{prob:generalized_controllability_scoring}
to node-wise actuation by setting
\begin{align}
m=n=10,
\qquad
b_i=e_i,
\quad
i=1,\ldots,10.
\end{align}

For each horizon \(T\) and each objective
\(f\in\{f_{\rm V},f_{\rm A}\}\), we solve two instances of
problem~\eqref{prob:generalized_controllability_scoring}.
The full problem uses \(C=\Delta_{10}\). For the restricted problem,
we retain the input locations
$S
:=
\{1,2,3,4,7,9,10\}$
and use the simplex face
\begin{align}
C_S
&:=
\left\{
p\in\Delta_{10}
\ \middle|\
p_i=0
\text{ for every }i\notin S
\right\}\notag\\
&=
\left\{
p\in\Delta_{10}
\ \middle|\
p_5=p_6=p_8=0
\right\}.
\end{align}
Nodes \(5\), \(6\), and \(8\) remain part of the network dynamics but
are unavailable as direct input locations in the restricted problem.

\begin{figure}[t]
    \centering
    \begin{tikzpicture}[
        scale=0.7,
        every node/.style={font=\normalsize},
        admissible/.style={
            circle,
            draw,
            thick,
            minimum size=0.56cm,
            inner sep=0pt
        },
        excluded/.style={
            admissible,
            very thick,
            dashed,
            fill=gray!30
        },
        edge/.style={
            -Latex,
            thick
        }
    ]
        \node[admissible] (n1)  at (4.1,2.3) {1};
        \node[admissible] (n2)  at (0.5,2.3) {2};
        \node[admissible] (n3)  at (2.9,2.3) {3};
        \node[admissible] (n4)  at (1.7,2.3) {4};
        \node[admissible] (n7)  at (2.3,3.5) {7};
        \node[admissible] (n9)  at (5.1,3.5) {9};
        \node[admissible] (n10) at (0.5,1.0) {10};

        \node[excluded] (n5) at (4.1,0.0) {5};
        \node[excluded] (n6) at (1.7,0.0) {6};
        \node[excluded] (n8) at (2.9,0.0) {8};

        \draw[edge] (n1) -- (n5);
        \draw[edge] (n2) -- (n10);
        \draw[edge] (n3) -- (n8);
        \draw[edge] (n4) -- (n6);
        \draw[edge] (n7) -- (n1);
        \draw[edge] (n7) -- (n2);
        \draw[edge] (n7) -- (n3);
        \draw[edge] (n7) -- (n4);
        \draw[edge] (n9) -- (n1);
        \draw[edge] (n10) -- (n6);

\node[admissible, minimum size=0.42cm]
    at (0.55,-0.85) {};
\node[
    anchor=west,
    font=\scriptsize
] at (0.90,-0.85)
    {intervenable};

\node[excluded, minimum size=0.42cm]
    at (3.55,-0.85) {};
\node[
    anchor=west,
    font=\scriptsize
] at (3.90,-0.85)
    {non-intervenable};
    \end{tikzpicture}
      \caption{Directed network used in the numerical experiment.
    Nodes \(5\), \(6\), and \(8\), indicated by shaded dashed circles,
    remain in the network dynamics but are excluded as candidate input
    locations.}
    \label{fig:network}
\end{figure}

The resulting restricted input pair $(A,B_S)$ is controllable. Hence,
Corollary~\ref{cor:candidate_exclusion_feasibility} implies that
$C_S\cap X_T\neq\emptyset$
for every \(T>0\). Thus, excluding direct intervention at nodes
\(5\), \(6\), and \(8\) preserves full-state controllability and
makes the restricted scoring problem feasible at every finite horizon.

This conclusion depends essentially on retaining node \(9\). Indeed, if
node \(9\) is also excluded, the admissible set becomes
$\{1,2,3,4,7,10\}$.
The corresponding controllability matrix has rank \(9\), and the
resulting simplex face is therefore infeasible for every \(T>0\).
Because node \(9\) has no incoming edge, removing its input leaves its
source mode unactuated. Node \(9\) is thus required for feasibility
under the present exclusion pattern.

We evaluate the full and restricted problems at
$T\in\{1,10,100\}$
to examine the allocation behavior from shorter to longer horizons.
The restricted-injectivity quantities from
Proposition~\ref{prop:computable_restricted_injectivity} satisfy
\begin{align}
\left(
\sigma_{\min}(M_S(T)Z_S)
\right)_{T=1,10,100}
=
\left(
0.7200,\,
1.8310,\,
1.8232
\right).
\label{eq:numerical_restricted_injectivity_values}
\end{align}
Their positivity verifies restricted injectivity at all three horizons.
Together with feasibility, this verifies the assumptions of
Theorem~\ref{thm:generalized_CS_linear_convergence}, which guarantee
uniqueness of both the VCS and AECS optimizers and linear convergence of
the exact algorithm with \(\epsilon=0\).

For the numerical computations, Algorithm~\ref{alg:PG_general} is
initialized at the uniform allocation on the corresponding simplex or
simplex face, with
\begin{align}
\bar\alpha=1,\qquad
\sigma=0.1,\qquad
\rho=0.5,\qquad
\epsilon=10^{-4}.
\end{align}
Table~\ref{tab:directed_allocations} reports the computed allocations
for the full and restricted problems. Boldface and underlining indicate
the largest and second-largest entries in each column, respectively.

\begin{table*}[t]
\centering
\caption{Optimal finite-horizon VCS and AECS allocations at
$T=1$, $10$, and $100$ over the full simplex and the simplex face
excluding nodes \(5\), \(6\), and \(8\).}
\label{tab:directed_allocations}
{\scriptsize
\setlength{\tabcolsep}{2.3pt}
\renewcommand{\arraystretch}{1.03}

\begin{tabular}[t]{@{}c|cc|cc@{}}
\multicolumn{5}{c}{(a) \(T=1\)}\\
\hline
& \multicolumn{2}{c|}{VCS}
& \multicolumn{2}{c}{AECS}\\
Node
& full
& restricted
& full
& restricted\\
\hline
1  & 0.09967 & \underline{0.19960}
   & \textbf{0.10951} & \textbf{0.30714}\\
2  & 0.10000 & 0.09991
   & 0.09999 & 0.01748\\
3  & 0.10000 & \textbf{0.19980}
   & 0.09999 & \underline{0.30526}\\
4  & 0.09999 & 0.14975
   & 0.09998 & 0.16919\\
5  & 0.09967 & 0
   & 0.09976 & 0\\
6  & 0.09935 & 0
   & \underline{0.10927} & 0\\
7  & \textbf{0.10132} & 0.10098
   & 0.09111 & 0.01589\\
8  & 0.09967 & 0
   & 0.09979 & 0\\
9  & \underline{0.10033} & 0.10020
   & 0.09065 & 0.01585\\
10 & 0.10000 & 0.14976
   & 0.09996 & 0.16919\\
\hline
\end{tabular}
\hfill
\begin{tabular}[t]{@{}c|cc|cc@{}}
\multicolumn{5}{c}{(b) \(T=10\)}\\
\hline
& \multicolumn{2}{c|}{VCS}
& \multicolumn{2}{c}{AECS}\\
Node
& full
& restricted
& full
& restricted\\
\hline
1  & 0.08647 & 0.16166
   & \textbf{0.14966} & \textbf{0.25120}\\
2  & 0.10118 & 0.09709
   & 0.10536 & 0.07617\\
3  & 0.10247 & \textbf{0.18172}
   & 0.10745 & \underline{0.20201}\\
4  & 0.09196 & 0.13031
   & 0.10173 & 0.18620\\
5  & 0.08632 & 0
   & 0.08782 & 0\\
6  & 0.07800 & 0
   & \underline{0.13187} & 0\\
7  & \textbf{0.16835} & \underline{0.17561}
   & 0.07685 & 0.05655\\
8  & 0.07882 & 0
   & 0.08958 & 0\\
9  & \underline{0.11603} & 0.12237
   & 0.05198 & 0.04067\\
10 & 0.09041 & 0.13124
   & 0.09769 & 0.18720\\
\hline
\end{tabular}
\hfill
\begin{tabular}[t]{@{}c|cc|cc@{}}
\multicolumn{5}{c}{(c) \(T=100\)}\\
\hline
& \multicolumn{2}{c|}{VCS}
& \multicolumn{2}{c}{AECS}\\
Node
& full
& restricted
& full
& restricted\\
\hline
1  & 0.07369 & 0.09222
   & \textbf{0.16757} & 0.18555\\
2  & 0.10147 & 0.09750
   & 0.11255 & 0.09312\\
3  & 0.10887 & 0.15220
   & 0.11990 & \underline{0.18656}\\
4  & 0.08653 & 0.10836
   & 0.10462 & 0.16679\\
5  & 0.05120 & 0
   & 0.08612 & 0\\
6  & 0.06119 & 0
   & \underline{0.13301} & 0\\
7  & \textbf{0.24672} & \textbf{0.25182}
   & 0.09103 & 0.08282\\
8  & 0.04344 & 0
   & 0.06967 & 0\\
9  & \underline{0.16003} & \underline{0.19257}
   & 0.02117 & 0.09291\\
10 & 0.06687 & 0.10534
   & 0.09436 & \textbf{0.19225}\\
\hline
\end{tabular}
}
\end{table*}

\subsection{Short-horizon interpretation}

For the full problem and any fixed
\(p\in\operatorname{ri}\Delta_{10}\),
\begin{align}
W(p,T)
&=
T\operatorname{diag}(p)+\mathcal{O}(T^2),\notag\\
f_{\rm V}(p)
&=
-10\log T
-\sum_{i=1}^{10}\log p_i
+\mathcal{O}(T),
\label{eq:full_short_VCS_numerical}\\
f_{\rm A}(p)
&=
\frac{1}{T}
\sum_{i=1}^{10}\frac{1}{p_i}
+\mathcal{O}(1)
\label{eq:full_short_AECS_numerical}
\end{align}
as \(T\to0\).
Hence, the optimal full-simplex allocations converge to
\(\frac{1}{10}\bm 1_{10}\)
\cite[Lemma~4 and Theorem~4]{sato2025uniqueness}.
The computed allocations at \(T=1\) are already close to this limit.

For the restricted problem, however,
\(T\operatorname{diag}(p)\) is singular because
\(p_5=p_6=p_8=0\).
Higher-order propagation through the network therefore affects the
allocation even at short horizons.
The following discussion is only an interpretation of the computed
allocations at \(T=1\), not a rigorous \(T\to0\) asymptotic result.

The restricted VCS allocation is close to the profile assigning
\(0.2\) to nodes \(1\) and \(3\), \(0.15\) to nodes \(4\) and \(10\),
and \(0.1\) to nodes \(2\), \(7\), and \(9\).
This is consistent with nodes \(1\) and \(3\) compensating for the
excluded nodes \(5\) and \(8\), while nodes \(4\) and \(10\) provide
short paths to node \(6\).

The restricted AECS allocation is more concentrated:
\begin{align}
p_1&=0.30714,&
p_3&=0.30526,&
p_4+p_{10}&=0.33838,
\end{align}
whereas \(p_2+p_7+p_9=0.04922\).
This concentration is consistent with the stronger emphasis of AECS on
weakly controllable directions.

Thus, the numerical results suggest that candidate exclusion makes the
short-horizon allocation both network dependent and criterion dependent.

\subsection{Horizon dependence and criterion-dependent reallocation}

At \(T=10\), contributions from longer directed paths become more
pronounced. Node \(7\) directly influences nodes \(1\), \(2\), \(3\),
and \(4\), and reaches nodes \(5\), \(6\), \(8\), and \(10\) through
subsequent paths. Consistently, its full VCS allocation increases from
\(0.10132\) at \(T=1\) to \(0.16835\) at \(T=10\). In the restricted
VCS problem, this broad downstream influence is combined with local
compensation: nodes \(3\) and \(7\) receive the two largest
allocations, while nodes \(1\), \(4\), and \(10\) retain substantial
weights because they provide short paths to the excluded terminal
states. By contrast, the restricted AECS allocation remains
concentrated at nodes \(1\), \(3\), \(4\), and \(10\), which together
receive approximately \(82.7\%\) of the resource.

This difference can be interpreted from the spectral forms of the
objective functions underlying the two allocations. As a Gramian
eigenvalue approaches zero, its reciprocal in the inverse-trace
objective defining AECS grows more rapidly than its negative logarithm
in the log-determinant objective defining VCS. The larger AECS
reallocation is therefore consistent with its stronger emphasis on
the relatively weak controllability directions associated with the
excluded terminal states.

At \(T=100\), VCS places greater emphasis on the source nodes \(7\)
and \(9\). Their combined allocations are
\begin{align}
p_7+p_9
&=
0.40675
&&\text{in the full problem},\notag\\
p_7+p_9
&=
0.44439
&&\text{in the restricted problem}.
\end{align}
This pattern is consistent with the increasing long-horizon importance
of the upstream source nodes. At the same time, the total full VCS
allocation assigned to the excluded nodes decreases from \(0.29869\)
at \(T=1\) to \(0.15583\) at \(T=100\). Thus, as the horizon increases,
the unrestricted VCS itself places progressively less weight on the
terminal nodes.

AECS retains a different long-horizon pattern. Even at \(T=100\),
nodes \(1\), \(3\), \(4\), and \(10\) receive approximately \(73.1\%\)
of the restricted allocation. Hence, AECS continues to emphasize
input locations that improve the relatively weak directions associated
with the excluded terminal states, whereas VCS increasingly emphasizes
input locations with broad downstream influence.

To summarize the overall reallocation caused by exclusion, define
$d_{\rm V}(T)
:=
\left\|
p_{\rm full}^{\rm V}(T)-p_{\rm res}^{\rm V}(T)
\right\|_1$ and
$d_{\rm A}(T)
:=
\left\|
p_{\rm full}^{\rm A}(T)-p_{\rm res}^{\rm A}(T)
\right\|_1$.
The values at \(T=1\), \(10\), and \(100\) are
\begin{align}
d_{\rm V}(T)
&=
0.5985,\ 0.4945,\ 0.3196,\\
d_{\rm A}(T)
&=
1.0827,\ 0.7402,\ 0.6329,
\end{align}
respectively. Thus, exclusion induces a larger change in the optimal
AECS allocation at every reported horizon. The VCS distance decreases
markedly with \(T\), because the unrestricted VCS itself places
progressively less weight on the excluded terminal nodes, whereas the
AECS distance remains substantial even at \(T=100\).

\section{Conclusions}
\label{sec:conclusion}

\subsection{Summary}

We developed a domain-aware Armijo projected-gradient analysis for convex
objectives that are smooth only on an open domain.
Under the boundary-blow-up condition, every run initialized at a feasible
point remains in a compact invariant sublevel set separated from the
complement of the objective domain.
The resulting safe-neighborhood localization yields well-defined and
finitely terminating backtracking, a run-specific but
iteration-independent positive lower bound on the accepted step sizes,
explicit objective and stationarity bounds, and convergence of the full
iterate sequence.
Positive curvature restricted to feasible displacement directions further
guarantees uniqueness and linear convergence.

For controllability scoring with prescribed input directions and compact
convex allocation constraints, the affine dependence of the finite-horizon
Gramian on the allocation yields an exact feasibility characterization in
terms of controllability of the candidate input directions eligible for
positive allocation.
Restricted injectivity of the Gramian map on the feasible-direction space
provides explicit strong-convexity bounds and hence uniqueness and linear
convergence.
For candidate-exclusion constraints, the same condition admits a
computable reduced-coordinate characterization through a minimum singular
value.

In the directed-network example, excluding nodes \(5\), \(6\), and \(8\)
preserved both feasibility and restricted injectivity at the three reported
horizons, whereas additionally excluding node \(9\) destroyed feasibility.
The computed optimal allocations varied substantially with both the
criterion and the horizon.
At \(T=1\), \(10\), and \(100\), candidate exclusion produced larger
\(\ell_1\) allocation changes for AECS than for VCS.
In this example, the VCS allocation distance decreased as the horizon
increased, while the AECS distance remained substantial at the reported
horizons.

\subsection{Extensions and broader applicability}

The framework developed in this paper is not specific to
time-invariant controllability scoring. On a fixed time interval, the
LTV controllability Gramian considered in
\cite{umezu2026controllability} remains linear in the allocation
variables. Therefore, the feasibility, boundary-separation, and
domain-aware convergence arguments can be transferred to prescribed
LTV input directions under compact convex allocation constraints.
Likewise, by controllability--observability duality, the same framework
applies to the observability scores proposed in
\cite[Sec.~VII]{sato2024scores}. In that setting, observability of the
active sensor family replaces controllability of the active input
family, while restricted injectivity is imposed on the corresponding
observability-Gramian map.

The same matrix-allocation structure also appears in finite-candidate
approximate optimal experimental design (OED), where the information
matrix is affine in the experimental design weights
\cite{pukelsheim2006optimal,huan2024optimal}. As shown in
\cite{sato2026relationship}, this structure establishes a direct
correspondence between VCS and D-optimality and between AECS and
A-optimality. Hence, for a compact convex set of admissible design
weights, the present open-domain analysis is directly applicable when
the active experimental conditions collectively generate a
positive-definite information matrix. In this setting, restricted
injectivity requires that no nonzero feasible perturbation of the design
weights leave the information matrix unchanged. Under this condition,
the feasible-direction curvature argument gives uniqueness and linear
convergence. If restricted injectivity fails, the existence and
full-sequence convergence results still apply under the same
positive-definiteness condition, although the optimal design weights may
be nonunique.

\section*{Acknowledgments}
This work was supported by JST PRESTO, Japan, Grant Number JPMJPR25K4. 
The author used OpenAI ChatGPT throughout the preparation and revision of
this manuscript. All assumptions, definitions,
theorems, proofs, citations, and numerical results were independently
verified by the author, who takes full responsibility for the content of
the manuscript.

\appendix

\subsection{Proofs of Lemma~\ref{lem:sublevel_geometry} and
Corollary~\ref{thm:existence}}
\label{app:sublevel_geometry_and_attainment}

\begin{proof}[Lemma~\ref{lem:sublevel_geometry}]
The set $\mathcal F^{(0)}$ is nonempty because it contains
$p^{(0)}$. Applying Assumption~\ref{ass:blowup} with
$\alpha=f(p^{(0)})$ gives
\eqref{eq:sublevel_boundary_distance}.
We next prove closedness. Let
$\{p^{(j)}\}\subset\mathcal F^{(0)}$ converge to
$\bar p\in\Real^n$. Since $C$ is compact and hence closed,
$\bar p\in C$. If $U^c=\emptyset$, then $U=\Real^n$ and
$\bar p\in U$. If $U^c\neq\emptyset$, continuity of the distance function
gives
\begin{align}
\dist(p^{(j)},U^c)
\geq
\dist(\mathcal F^{(0)},U^c)>0
\end{align}
for every $j$. Passing to the limit yields
$\dist(\bar p,U^c)>0$, and hence $\bar p\in U$. Continuity of $f$ on $U$
now gives
\begin{align}
f(\bar p)
=
\lim_{j\to\infty}f(p^{(j)})
\leq
f(p^{(0)}).
\end{align}
Thus, $\bar p\in\mathcal F^{(0)}$, so $\mathcal F^{(0)}$ is closed.
Because it is a closed subset of the compact set $C$, it is compact.
Finally, $C\cap U$ is convex by Assumption~\ref{ass:set}, and a sublevel
set of the convex function $f$ is convex. Hence,
$\mathcal F^{(0)}$ is convex.
\qed
\end{proof}

\begin{proof}[Corollary~\ref{thm:existence}]
Let $p^{(0)}\in C\cap U$ be the feasible point used to define
$\mathcal F^{(0)}$. By Lemma~\ref{lem:sublevel_geometry},
the set $\mathcal F^{(0)}$ is nonempty and compact. Since $f$ is
continuous on $\mathcal F^{(0)}$, the Weierstrass theorem \cite[Prop.~A.8]{bertsekas2016nonlinear} yields a point
$p^\star\in\mathcal F^{(0)}$ that minimizes $f$ over
$\mathcal F^{(0)}$.
For every $p\in(C\cap U)\setminus\mathcal F^{(0)}$, we have
\begin{align}
f(p)>f\bigl(p^{(0)}\bigr)\geq f(p^\star).
\end{align}
Hence, $p^\star$ minimizes $f$ over all of $C\cap U$ and is therefore
an optimal solution of problem~\eqref{prob:general}.

If $f$ is strictly convex on the convex set $\mathcal F^{(0)}$, the optimal solution is unique by \cite[Prop.~1.1.2]{bertsekas2016nonlinear}.
\qed
\end{proof}

\subsection{Proof of Theorem~\ref{thm:well_defined}}
\label{app:uniform_open_domain_safeguards}

\begin{proof}[Theorem~\ref{thm:well_defined}]
We first verify that the neighborhood $\mathcal K_r$ remains inside the
objective domain. Suppose that $U^c\ne\emptyset$. For every
$q\in\mathcal K_r$, compactness of $\mathcal F^{(0)}$ gives
$p\in\mathcal F^{(0)}$ such that $\|q-p\|\leq r$. Since
$r\leq d/2$, the triangle inequality for set distance yields
\begin{align}
\dist(q,U^c)
\geq
\dist(p,U^c)-\|q-p\|
\geq
d-r
\geq
\frac{d}{2}
>0.
\label{eq:safe_neighborhood_distance}
\end{align}
Hence, $\mathcal K_r\subset U$. The same inclusion is immediate when
$U^c=\emptyset$.

We next obtain a uniform smoothness constant on this neighborhood. Since
$f$ is twice continuously differentiable on $U$ and $\mathcal K_r$ is
compact and convex, the constant $L$ defined in
\eqref{eq:uniform_acceptance_constants} is finite and is a Lipschitz
constant of $\nabla f$ on $\mathcal K_r$.

Now fix $p\in\mathcal F^{(0)}$. Since
$p\in\mathcal F^{(0)}$, the displacement bound
\eqref{eq:cited_trial_displacement} and the definition of $G$ give
\begin{align}
\dist\bigl(q_\alpha(p),\mathcal F^{(0)}\bigr)
\leq
\|q_\alpha(p)-p\|
\leq
\alpha\|\nabla f(p)\|
\leq
\alpha G.
\label{eq:trial_safe_neighborhood_bound}
\end{align}
Therefore, $\alpha\leq r/G$ implies
$q_\alpha(p)\in\mathcal K_r\subset U$.

Because the segment joining \(p\) and \(q_\alpha(p)\) lies in the convex
set \(\mathcal K_r\), the descent lemma
\cite[Lem.~5.7]{beck2017first} gives
\begin{align}
f(q_\alpha(p))
&\leq
f(p)
+\nabla f(p)^\top(q_\alpha(p)-p)
+\frac{L}{2}\|q_\alpha(p)-p\|^2.
\notag
\end{align}
Combining this inequality with
\eqref{eq:cited_projection_descent} yields
\begin{align}
f(q_\alpha(p))
&\leq
f(p)
+\left(1-\frac{L\alpha}{2}\right)
\nabla f(p)^\top(q_\alpha(p)-p).
\label{eq:armijo_threshold_derivation}
\end{align}
Moreover, \eqref{eq:cited_projection_descent} implies
$\nabla f(p)^\top(q_\alpha(p)-p)\leq 0$.
From $\alpha\leq\frac{2(1-\sigma)}{L}$,
\eqref{eq:armijo_threshold_derivation} therefore gives
\begin{align}
f(q_\alpha(p))
\leq
f(p)
+\sigma\nabla f(p)^\top(q_\alpha(p)-p).
\end{align}
Thus, for every \(p\in\mathcal F^{(0)}\), every trial step satisfying
$\alpha\leq\alpha_\star$
passes both the domain test and the Armijo condition \eqref{eq:armijo_acceptance_condition}.

We now apply this observation at each iteration. Suppose that
\(p^{(k)}\in\mathcal F^{(0)}\). Since
\(\bar\alpha\rho^j\to0\) as \(j\to\infty\), there exists a smallest
nonnegative integer \(j_\star\) such that
$\bar\alpha\rho^{j_\star}\leq\alpha_\star$.
The trial with step size \(\bar\alpha\rho^{j_\star}\) passes both
acceptance tests, so the backtracking loop terminates no later than this
trial. If \(j_\star\geq1\), the minimality of \(j_\star\) gives
$\bar\alpha\rho^{j_\star-1}
>
\alpha_\star$,
and hence
$\bar\alpha\rho^{j_\star}
>
\rho\alpha_\star$.
If \(j_\star=0\), then \(\bar\alpha\leq\alpha_\star\) and the first trial
is accepted. Consequently, the accepted step size satisfies
\eqref{eq:uniform_stepsize_lower_bound}.

For the accepted point
$\tilde p^{(k)}
=
q_{\alpha_k}(p^{(k)})$, the Armijo condition \eqref{eq:armijo_acceptance_condition} and \eqref{eq:cited_projection_descent} give
\begin{align}
f(\tilde p^{(k)})
\leq
f(p^{(k)})
-\frac{\sigma}{\alpha_k}
\|\tilde p^{(k)}-p^{(k)}\|^2,
\end{align}
and
\eqref{eq:projected_gradient_mapping}
yields
\eqref{eq:sufficient_decrease_mapping}.
It remains to verify invariance of the initial sublevel set. By
definition, \(p^{(0)}\in\mathcal F^{(0)}\). Suppose inductively that
\(p^{(k)}\in\mathcal F^{(0)}\). Then,
\eqref{eq:sufficient_decrease_mapping} gives
$f(\tilde p^{(k)})
\leq
f(p^{(k)})
\leq
f(p^{(0)})$.
Therefore,
$\tilde p^{(k)}\in\mathcal F^{(0)}$.
If the algorithm does not terminate at iteration \(k\), then
\(p^{(k+1)}=\tilde p^{(k)}\in\mathcal F^{(0)}\). Hence, every accepted
trial point and every generated iterate remain in
\(\mathcal F^{(0)}\).
\qed
\end{proof}

\subsection{Proofs of Theorem~\ref{thm:global_convergence} and
Corollary~\ref{cor:full_sequence_convergence}}
\label{app:localized_rates_and_full_sequence}

\begin{proof}[Theorem~\ref{thm:global_convergence}]
Fix an integer $K\geq0$ such that
$p^{(0)},\ldots,p^{(K+1)}$ are generated.
Then iterations $k=0,\ldots,K$ are nonterminal and satisfy
\begin{align}
p^{(k+1)}
=
\tilde p^{(k)}
=
P_C\left(p^{(k)}-\alpha_k\nabla f(p^{(k)})\right).
\label{eq:accepted_nonterminal_update}
\end{align}
 Theorem~\ref{thm:well_defined} supplies 
the explicit uniform bounds
$\underline\alpha\leq\alpha_k\leq\bar\alpha$.

The stationarity estimate follows by summing the sufficient-decrease
inequalities, as in \cite[Thm.~10.15]{beck2017first}.
In fact, summing \eqref{eq:sufficient_decrease_mapping} and using
$\alpha_k\geq\underline\alpha$ gives
\begin{align}
\sigma\underline\alpha
\sum_{k=0}^{K}
\|\mathcal G_{\alpha_k}(p^{(k)})\|^2
&\leq
f(p^{(0)})-f(p^{(K+1)})
\notag\\
&\leq
f(p^{(0)})-f^\star.
\label{eq:summed_sufficient_decrease}
\end{align}
The bound \eqref{eq:global_stationarity_complexity} follows by comparing
the smallest summand with the average. If the generated sequence is infinite, letting \(K\to\infty\) proves
\eqref{eq:gradient_mapping_summability}. Since the summands are
nonnegative, \eqref{eq:gradient_mapping_vanishes} follows.

The objective estimate
uses a distance-based telescoping argument related to
\cite[Thm.~10.21]{beck2017first}.
In fact, to prove \eqref{eq:global_objective_complexity}, define
$g_k
:=
\nabla f(p^{(k)})$ and
$
G_k
:=
\mathcal G_{\alpha_k}(p^{(k)})$.
Then
\begin{align}
p^{(k+1)}
=
p^{(k)}-\alpha_kG_k.
\label{eq:accepted_gradient_mapping_update}
\end{align}
Firm nonexpansiveness of $P_C$, applied to
$p^{(k)}-\alpha_k g_k$ and $p^\star=P_C(p^\star)$, yields
\begin{align}
\|p^{(k+1)}-p^\star\|^2
&\leq
\|p^{(k)}-p^\star\|^2
-2\alpha_k g_k^\top(p^{(k)}-p^\star)
\notag\\
&\quad
-\alpha_k^2\|G_k\|^2
+2\alpha_k^2 g_k^\top G_k.
\label{eq:firm_projection_objective_step}
\end{align}
By convexity of $f$,
\begin{align}
g_k^\top(p^{(k)}-p^\star)
\geq
f(p^{(k)})-f^\star.
\label{eq:convexity_objective_gap}
\end{align}
Moreover, the accepted Armijo condition
\eqref{eq:armijo_acceptance_condition} and
\eqref{eq:accepted_gradient_mapping_update} give
\begin{align}
\alpha_k g_k^\top G_k
\leq
\frac{
f(p^{(k)})-f(p^{(k+1)})
}{\sigma}.
\label{eq:armijo_gradient_inner_product}
\end{align}
Using these estimates in
\eqref{eq:firm_projection_objective_step}, dropping the nonpositive term
$-\alpha_k^2\|G_k\|^2$, and using
$\alpha_k\leq\bar\alpha$, we obtain
\begin{align}
2\alpha_k\bigl(f(p^{(k)})-f^\star\bigr)
&\leq
\|p^{(k)}-p^\star\|^2
-\|p^{(k+1)}-p^\star\|^2
\notag\\
&\quad+
\frac{2\bar\alpha}{\sigma}
\bigl(f(p^{(k)})-f(p^{(k+1)})\bigr).
\label{eq:one_step_objective_telescope}
\end{align}
Summing \eqref{eq:one_step_objective_telescope} from $k=0$ to $K$,
and using the monotonicity of the objective values together with
$\alpha_k\geq\underline\alpha$, gives
\begin{align}
& 2\underline\alpha(K+1)
\left(f(p^{(K)})-f^\star\right)
\leq
2\sum_{k=0}^{K}
\alpha_k\left(f(p^{(k)})-f^\star\right) 
\notag\\
&\leq
\|p^{(0)}-p^\star\|^2
-\|p^{(K+1)}-p^\star\|^2
\notag\\
&\quad+
\frac{2\bar\alpha}{\sigma}
\left(f(p^{(0)})-f(p^{(K+1)})\right)
\notag\\
&\leq
\|p^{(0)}-p^\star\|^2
+
\frac{2\bar\alpha}{\sigma}
\left(f(p^{(0)})-f^\star\right).
\end{align}
Dividing by $2\underline\alpha(K+1)$ proves
\eqref{eq:global_objective_complexity}.
\qed
\end{proof}

\begin{proof}[Corollary~\ref{cor:full_sequence_convergence}]
If the algorithm terminates finitely, then
$\mathcal G_{\alpha_k}(p^{(k)})=0$, so $p^{(k)}$ is optimal by
\eqref{eq:cited_gradient_mapping_optimality}. Suppose instead that the
sequence is infinite. By Theorem~\ref{thm:well_defined}, it is contained in
the compact set $\mathcal F^{(0)}$ and therefore has a cluster point.

Let \(\widehat p\) be an arbitrary cluster point of the sequence, and
choose a subsequence \(\{p^{(k_j)}\}\) such that
$p^{(k_j)}\to\widehat p$. Since
\(\alpha_{k_j}\in[\underline\alpha,\bar\alpha]\), compactness of this
interval allows us to pass to a further subsequence, not relabeled, such
that
$\alpha_{k_j}\to\widehat\alpha
\in[\underline\alpha,\bar\alpha]$.
Since \(\widehat p\in\mathcal F^{(0)}\subset C\cap U\) and
\(\widehat\alpha\geq\underline\alpha>0\), the mapping
\begin{align}
(p,\alpha)
\longmapsto
\mathcal G_\alpha(p)
=
\alpha^{-1}
\left(
p-P_C\left(p-\alpha\nabla f(p)\right)
\right)
\end{align}
is continuous at \((\widehat p,\widehat\alpha)\).
Therefore,
$\mathcal G_{\alpha_{k_j}}(p^{(k_j)})
\rightarrow
\mathcal G_{\widehat\alpha}(\widehat p)$.
On the other hand,
\eqref{eq:gradient_mapping_vanishes} gives
$\mathcal G_{\alpha_{k_j}}(p^{(k_j)})\longrightarrow 0$.
Hence,
$\mathcal G_{\widehat\alpha}(\widehat p)=0$.
By \eqref{eq:cited_gradient_mapping_optimality},
\(\widehat p\) is optimal.

It remains to show that one optimal cluster point attracts the entire
sequence. For any optimal solution $z$, inequality
\eqref{eq:one_step_objective_telescope} and nonnegativity of the objective
gap imply
\begin{align}
\|p^{(k+1)}-z\|^2
\leq
\|p^{(k)}-z\|^2+\eta_k,
\label{eq:quasi_fejer_estimate}
\end{align}
where
$\eta_k
:=
\frac{2\bar\alpha}{\sigma}
\left(f(p^{(k)})-f(p^{(k+1)})\right)
\geq 0$.
The objective values are nonincreasing and bounded below by $f^\star$;
hence
\begin{align}
\sum_{k=0}^{\infty}\eta_k
\leq
\frac{2\bar\alpha}{\sigma}
\left(f(p^{(0)})-f^\star\right)
<\infty.
\end{align}
Apply \eqref{eq:quasi_fejer_estimate} with
$z=\widehat p$, where $\widehat p$ is the optimal cluster point chosen
above. The sequence
$\|p^{(k)}-\widehat p\|^2
+\sum_{j=k}^{\infty}\eta_j$
is nonincreasing and bounded below, so
$\|p^{(k)}-\widehat p\|^2$ has a limit. Along the convergent subsequence
$p^{(k_j)}\to\widehat p$, this limit is zero. Therefore,
$p^{(k)}\to\widehat p$, which proves full-sequence convergence. \qed
\end{proof}

\subsection{Proofs of Lemma~\ref{lem:compact_strong_convexity} and
Theorem~\ref{thm:linear_convergence}}
\label{app:feasible_direction_curvature_and_linear_convergence}

\begin{proof}[Lemma~\ref{lem:compact_strong_convexity}]
Since \(\mathcal L_C\) is a finite-dimensional linear subspace of
\(\Real^n\), its unit sphere
$\mathbb S_{\mathcal L_C}
:=
\left\{
v\in\mathcal L_C
\mid
\|v\|=1
\right\}$
is compact. Hence,
\(\mathcal F^{(0)}\times\mathbb S_{\mathcal L_C}\) is compact.
Define
\begin{align}
\psi(p,v)
:=
v^\top\nabla^2 f(p)v,
\qquad
(p,v)\in
\mathcal F^{(0)}\times\mathbb S_{\mathcal L_C}.
\end{align}
Because \(f\) is twice continuously differentiable, the mapping
\(\psi\) is continuous. It therefore attains its minimum on the compact
set
\(\mathcal F^{(0)}\times\mathbb S_{\mathcal L_C}\). Thus, there exist
\(\bar p\in\mathcal F^{(0)}\) and
\(\bar v\in\mathbb S_{\mathcal L_C}\) such that
$\mu
=
\bar v^\top\nabla^2 f(\bar p)\bar v$.
By the assumption
\eqref{eq:PD_tangent}, the right-hand side is strictly positive.
Consequently,
$\mu>0$.
For every \(p\in\mathcal F^{(0)}\) and every nonzero
\(v\in\mathcal L_C\), applying the definition of \(\mu\) to
\(v/\|v\|\) gives
\begin{align}
\frac{v^\top\nabla^2 f(p)v}{\|v\|^2}
=
\left(\frac{v}{\|v\|}\right)^\top
\nabla^2 f(p)
\left(\frac{v}{\|v\|}\right)
\geq\mu.
\end{align}
Therefore, \eqref{eq:uniform_feasible_direction_curvature} holds.
The same inequality is immediate for \(v=0\).

To derive \eqref{eq:general_strong_convexity}, fix
\(p,q\in\mathcal F^{(0)}\) and set \(v:=q-p\).
Since \(\mathcal F^{(0)}\) is convex, the entire segment
\(p+tv\), \(t\in[0,1]\), lies in \(\mathcal F^{(0)}\).
Moreover, \(v\in C-C\subset\mathcal L_C\). Taylor's formula with
integral remainder and
\eqref{eq:uniform_feasible_direction_curvature} therefore give
\begin{align}
f(q)
&=
f(p)+\nabla f(p)^\top v
+\int_0^1(1-t)
v^\top\nabla^2 f(p+tv)v\,\D t
\notag\\
&\geq
f(p)+\nabla f(p)^\top(q-p)
+\frac{\mu}{2}\|q-p\|^2,
\end{align}
which proves \eqref{eq:general_strong_convexity}.
\qed
\end{proof}

\begin{proof}[Theorem~\ref{thm:linear_convergence}]
If $\mathcal L_C=\{0\}$, then $C$ is a singleton and the algorithm
terminates. We henceforth assume $\mathcal L_C\ne\{0\}$.
Lemma~\ref{lem:compact_strong_convexity} gives uniqueness. Put
$\delta_k:=f(p^{(k)})-f(p^\star)$,
$d^{(k)}:=p^{(k+1)}-p^{(k)}$, and
$t_k:=\min\{1,\mu\alpha_k\}$. 
By the minimizing property of the Euclidean projection,
\(p^{(k+1)}\) minimizes the quadratic model
$\nabla f(p^{(k)})^\top(x-p^{(k)})
+
\frac{1}{2\alpha_k}\|x-p^{(k)}\|^2$
over \(C\). Therefore, for every \(z\in C\),
\begin{align}
\nabla f(p^{(k)})^\top d^{(k)}
+\frac{\|d^{(k)}\|^2}{2\alpha_k}
&\leq
\nabla f(p^{(k)})^\top(z-p^{(k)})
\notag\\
&\quad+
\frac{\|z-p^{(k)}\|^2}{2\alpha_k}.
\label{eq:projection_model_inequality}
\end{align}
 Choose
$z=(1-t_k)p^{(k)}+t_kp^\star\in\mathcal F^{(0)}$. Strong convexity \eqref{eq:general_strong_convexity} gives
$\nabla f(p^{(k)})^\top(p^\star-p^{(k)})
\leq-\delta_k-\frac{\mu}{2}\|p^\star-p^{(k)}\|^2$,
and thus \eqref{eq:projection_model_inequality} implies that
\begin{align}
\nabla f(p^{(k)})^\top d^{(k)}
&\leq-t_k\delta_k
\nonumber\\
&\quad+\left(\frac{t_k^2}{2\alpha_k}
-\frac{\mu t_k}{2}\right)\|p^\star-p^{(k)}\|^2
\leq-t_k\delta_k,
\end{align}
because $t_k\leq\mu\alpha_k$. 
The accepted Armijo condition
\eqref{eq:armijo_acceptance_condition}, together with
$p^{(k+1)}=\tilde p^{(k)}$ and
$d^{(k)}=p^{(k+1)}-p^{(k)}$, therefore gives
\begin{align}
\delta_{k+1}
\leq
\delta_k
+
\sigma\nabla f(p^{(k)})^\top d^{(k)}
\leq
(1-\sigma t_k)\delta_k
\leq
\gamma\delta_k,
\end{align}
where
$t_k\geq\min\{1,\mu\underline\alpha\}$ follows from
$\alpha_k\geq\underline\alpha$.
 Iteration
proves the objective estimate. Finally, optimality of $p^\star$ and
\eqref{eq:general_strong_convexity} imply
$\delta_k\geq(\mu/2)\|p^{(k)}-p^\star\|^2$, which proves \eqref{eq:linear_iterate_convergence}.
\qed
\end{proof}

\subsection{Proofs of Lemma~\ref{lem:generalized_CS_differential_formulas}, Lemma~\ref{lem:generalized_CS_boundary_blowup}, and Theorem~\ref{thm:generalized_CS_global_convergence}}
\label{app:controllability_scoring_global_convergence}

\begin{proof}[Lemma~\ref{lem:generalized_CS_differential_formulas}]
Fix $p\in X_T$ and $v\in\Real^m$. By linearity of $\Phi_T$,
$W(p+sv,T)=W+sH$
for all sufficiently small $s$. The standard matrix identities
\begin{align}
\frac{\D}{\D s}\log\det(W+sH)
&=
\tr\left((W+sH)^{-1}H\right),\\
\frac{\D}{\D s}(W+sH)^{-1}
&=
-(W+sH)^{-1}H(W+sH)^{-1}
\end{align}
give \eqref{eq:generalized_VCS_first_derivative} and
\eqref{eq:generalized_AECS_first_derivative} at $s=0$. Differentiating once
more yields
\begin{align}
\left.\frac{\D^2}{\D s^2}f_{\rm V}(p+sv)\right|_{s=0}
&=
\tr(W^{-1}HW^{-1}H),\\
\left.\frac{\D^2}{\D s^2}f_{\rm A}(p+sv)\right|_{s=0}
&=
2\tr(W^{-1}HW^{-1}HW^{-1}),
\end{align}
which are \eqref{eq:generalized_VCS_second_derivative} and
\eqref{eq:generalized_AECS_second_derivative}.

To verify \eqref{eq:generalized_Hessian_kernel}, set
$S:=W^{-1/2}HW^{-1/2}$.
Then, $v^\top\nabla^2 f_{\rm V}(p)v=\|S\|_{\rm F}^2$ and
\begin{align}
v^\top\nabla^2 f_{\rm A}(p)v
=2
\tr(S^2W^{-1})
=2
\|SW^{-1/2}\|_{\rm F}^2.
\end{align}
Both second derivatives are therefore nonnegative, and either one vanishes
if and only if $S=O$. Since $W$ is positive definite, this is equivalent to
$H=O$, proving \eqref{eq:generalized_Hessian_kernel}.  \qed
\end{proof}

\begin{proof}[Lemma~\ref{lem:generalized_CS_boundary_blowup}]
Let $\{p^{(k)}\}\subset C\cap X_T$ satisfy
$\dist\left(p^{(k)},X_T^c\right)\to 0$,
and define
$\underline\lambda_k
:=
\lambda_{\min}\left(W(p^{(k)},T)\right)>0$.
Then $\underline\lambda_k\to0$. Indeed, otherwise a subsequence would satisfy
$\underline\lambda_k\geq\delta>0$. By compactness of $C$, a further
subsequence would converge to some $\bar p\in C$. The boundary-distance
condition would imply $\bar p\in X_T^c$, whereas continuity of
$\lambda_{\min}(W(p,T))$ would give
$\lambda_{\min}(W(\bar p,T))\geq\delta$, and hence $\bar p\in X_T$, a
contradiction.

The vanishing minimum eigenvalue forces both objectives to diverge.
Compactness of $C$ and continuity of $W(p,T)$ give a constant $\beta\geq1$
such that
$\|W(p,T)\|_2\leq\beta$
for all $p\in C$.
Consequently,
\begin{align}
f_{\rm V}(p^{(k)})
&\geq
-\log\underline\lambda_k-(n-1)\log\beta
\to+\infty,
\label{eq:VCS_boundary_blowup}\\
f_{\rm A}(p^{(k)})
&\geq
\frac{1}{\underline\lambda_k}
\to+\infty.
\label{eq:AECS_boundary_blowup}
\end{align}
The sequential characterization
\eqref{eq:sequential_boundary_blowup} proves the claim. \qed
\end{proof}

\begin{proof}[Theorem~\ref{thm:generalized_CS_global_convergence}]
Lemma~\ref{lem:generalized_CS_differential_formulas}, together with the
openness and convexity of $X_T$, verifies
Assumptions~\ref{ass:set} and \ref{ass:objective_regularity} with
$U=X_T$.
Lemma~\ref{lem:generalized_CS_boundary_blowup} verifies
Assumption~\ref{ass:blowup}.
Existence of an optimal solution therefore follows from
Corollary~\ref{thm:existence}.
Well-definedness and invariance of the initial sublevel set follow from
Theorem~\ref{thm:well_defined}, while full-sequence convergence follows
from Corollary~\ref{cor:full_sequence_convergence}.
Finally, Theorem~\ref{thm:global_convergence} gives
\eqref{eq:global_stationarity_complexity},
\eqref{eq:global_objective_complexity}, and
\eqref{eq:gradient_mapping_vanishes}.
\qed
\end{proof}

\subsection{Proofs of Lemma~\ref{lem:generalized_CS_uniform_curvature}, Theorem~\ref{thm:generalized_CS_linear_convergence}, and Proposition~\ref{prop:computable_restricted_injectivity}}
\label{app:restricted_injectivity_and_linear_convergence}

\begin{proof}[Lemma~\ref{lem:generalized_CS_uniform_curvature}]
Condition~\ref{cond:generalized_CS_restricted_injectivity} and compactness of the unit sphere in
$\mathcal L_C$ imply $\kappa_{C,T}>0$. For
\begin{align}
H:=\Phi_T(v),
\qquad
S:=W(p,T)^{-1/2}HW(p,T)^{-1/2},
\end{align}
the definition of $\beta_T$ gives
\begin{align}
\|S\|_{\rm F}
\geq
\frac{\|H\|_{\rm F}}
{\lambda_{\max}(W(p,T))}
\geq
\frac{\kappa_{C,T}}{\beta_T}\|v\|.
\label{eq:proof_S_lower_bound}
\end{align}
The VCS Hessian formula
\eqref{eq:generalized_VCS_second_derivative} therefore gives
\eqref{eq:explicit_mu_vcs}.

For the AECS objective,
$\lambda_{\max}(W(p,T))\leq\beta_T$ implies
$W(p,T)^{-1}
\succeq
\frac{1}{\beta_T}I$.
Since $H$ is symmetric, $S$ is symmetric and hence
$S^2\succeq O$. Using cyclic invariance of the trace, \eqref{eq:explicit_mu_aecs} follows from
\begin{align}
&v^\top\nabla^2f_{\rm A}(p)v \\
&=
2\tr\left(S^2W(p,T)^{-1}\right) = 2\tr\left(SW(p,T)^{-1}S\right)
\notag\\
&=
\frac{2}{\beta_T}\tr(S^2)
+2
\tr\left(
S\left(
W(p,T)^{-1}
-
\frac{1}{\beta_T}I
\right)S
\right)
\notag\\
&\geq
\frac{2}{\beta_T}\|S\|_{\rm F}^2
\geq
\frac{2\kappa_{C,T}^2}{\beta_T^3}\|v\|^2,
\end{align}
where the last inequality follows from
\eqref{eq:proof_S_lower_bound}.
Since \(C\cap X_T\) is convex and every displacement between two of its
points belongs to \(\mathcal L_C\), the same Taylor integral argument as
in the proof of Lemma~\ref{lem:compact_strong_convexity} shows that
\(f_{\rm V}\) and \(f_{\rm A}\) are strongly convex on \(C\cap X_T\)
with parameters \(\mu_{\rm V}\) and \(\mu_{\rm A}\), respectively. \qed
\end{proof}
\begin{proof}[Theorem~\ref{thm:generalized_CS_linear_convergence}]
By
Lemma~\ref{lem:generalized_CS_uniform_curvature}, the objective is
strongly convex on $C\cap X_T$ with parameter $\mu_{\rm V}$ or
$\mu_{\rm A}$. Existence follows from
Theorem~\ref{thm:generalized_CS_global_convergence}, and strong convexity
gives uniqueness. Since every initial sublevel set is contained in
$C\cap X_T$, Theorem~\ref{thm:linear_convergence} applies with the
corresponding strong-convexity parameter and gives the stated linear
convergence. \qed
\end{proof}
\begin{proof}[Proposition~\ref{prop:computable_restricted_injectivity}]
Equation~\eqref{eq:Gramian_map_reduced_coordinates} and the identity
\(\|\operatorname{vec}(H)\|=\|H\|_{\rm F}\) give
\eqref{eq:restricted_injectivity_matrix_test}.

Condition~\ref{cond:generalized_CS_restricted_injectivity}
is equivalent to \eqref{Condition2}, and hence also to
\eqref{eq:restricted_injectivity_singular_value_test}.

By the bijective isometric parameterization
\(v=E_SZ_Sy\) of \(\mathcal L_{C_S}\), together with
\eqref{eq:restricted_injectivity_matrix_test}, we have
\begin{align}
\kappa_{C_S,T}
=
\min_{\substack{
y\in\Real^{k-1}\\
\|y\|=1}}
\|M_SZ_Sy\| =
\sigma_{\min}(M_SZ_S),
\end{align}
proving \eqref{eq:kappa_as_minimum_singular_value},
where the last equality follows from the variational characterization
of the minimum singular value.

For every \(p\in C_S\),
\begin{align}
\lambda_{\max}(W(p,T))
&\leq
\sum_{i\in S}p_i\lambda_{\max}(W_i(T))
\notag\\
&\leq
\max_{i\in S}\lambda_{\max}(W_i(T)).
\end{align}
Equality is attained at a vertex associated with an index maximizing the
right-hand side, which proves \eqref{eq:beta_on_simplex_face}. \qed
\end{proof}

\subsection{Proofs of Lemma~\ref{lem:general_convex_feasibility}, Corollary~\ref{cor:candidate_exclusion_feasibility}, and Lemma~\ref{lem:candidate_exclusion_reduced_coordinates} }
\label{app:feasibility_and_candidate_exclusion}

\begin{proof}[Lemma~\ref{lem:general_convex_feasibility}]
We first show that every relative-interior point of $C$ has support
exactly $S_C$. Clearly, $p_i^\circ=0$ for every $i\notin S_C$. Conversely,
let $i\in S_C$ and choose $q\in C$ with $q_i>0$. Since \(p^\circ\in\operatorname{ri}C\), the definition of the relative
interior implies that, for sufficiently small \(\theta>0\),
$p^\circ+\theta(p^\circ-q)\in C$.
 If $p_i^\circ=0$, the $i$th component
of this point would be negative, contradicting
$C\subseteq\Delta_m$. Hence,
\begin{align}
p_i^\circ>0
\quad
\text{for all } i\in S_C.
\label{eq:relative_interior_full_support}
\end{align}

We next relate feasibility to the combined Gramian of the active candidate
channels. If $p\in C\cap X_T$, then \(p_i=0\) outside \(S_C\) and
\(0\leq p_i\leq1\), so
\begin{align}
\sum_{i\in S_C}W_i(T)
\succeq
\sum_{i\in S_C}p_iW_i(T)
=
W(p,T)
\succ O.
\end{align}
Conversely, suppose that
$\sum_{i\in S_C}W_i(T)\succ O$. By
\eqref{eq:relative_interior_full_support},
\(\delta:=\min_{i\in S_C}p_i^\circ>0\), and therefore
\begin{align}
W(p^\circ,T)
=
\sum_{i\in S_C}p_i^\circ W_i(T)
\succeq
\delta\sum_{i\in S_C}W_i(T)
\succ O.
\end{align}
Thus, $p^\circ\in C\cap X_T$.

Finally, $\sum_{i\in S_C}W_i(T)$ is the finite-horizon controllability
Gramian associated with $(A,B_{S_C})$. Its positive definiteness for
$T>0$ is equivalent to controllability of this pair, completing the
proof. \qed
\end{proof}

\begin{proof}[Corollary~\ref{cor:candidate_exclusion_feasibility}]
For $C=C_S$, we have $S_C=S$ and
$p^{\rm uni}\in\operatorname{ri}C_S$. Moreover,
$W(p^{\rm uni},T)
=
\frac{1}{k}\sum_{i\in S}W_i(T)$.
The result therefore follows directly from
Lemma~\ref{lem:general_convex_feasibility}. \qed
\end{proof}

\begin{proof}[Lemma~\ref{lem:candidate_exclusion_reduced_coordinates}]
By construction, $E_Sq$ places the components of $q$ at the indices in
$S$ and places zeros at all indices outside $S$. Therefore,
$q\mapsto E_Sq$ maps $\Delta_k$ bijectively onto $C_S$, proving the statement 1). The statement 2) follows from
\begin{align}
\Phi_T(E_S q)
=
\sum_{\ell=1}^m(E_Sq)_\ell W_\ell(T)
=
\sum_{j=1}^k q_jW_{i_j}(T).
\end{align}

To prove the statement 3), let \(z\in\Real^m\). Since the columns
of \(E_S\) are distinct standard basis vectors, 
$E_S^\top E_S=I_k$.
Consequently, \(E_SE_S^\top\) is the orthogonal projector onto
\(\operatorname{Im}E_S\), the coordinate subspace consisting of vectors
supported on \(S\). Its orthogonal complement is projected onto by
\(I-E_SE_S^\top\).
For any \(q\in\Delta_k\), we have \(E_Sq\in\operatorname{Im}E_S\), and
therefore
$(I-E_SE_S^\top)E_Sq = 0$.
Decomposing \(z-E_Sq\) into its components in
\(\operatorname{Im}E_S\) and
\((\operatorname{Im}E_S)^\perp\) gives
\begin{align}
z-E_Sq
&=
E_SE_S^\top(z-E_Sq)
+
(I-E_SE_S^\top)(z-E_Sq)
\notag\\
&=
E_S(E_S^\top z-q)
+
(I-E_SE_S^\top)z.
\label{eq:projection_face_orthogonal_decomposition}
\end{align}
The two terms on the right-hand side are orthogonal. Hence, the
Pythagorean identity yields
\begin{align}
\|z-E_Sq\|^2
&=
\|E_S(E_S^\top z-q)\|^2
+
\|(I-E_SE_S^\top)z\|^2
\notag\\
&=
\|E_S^\top z-q\|^2
+
\|(I-E_SE_S^\top)z\|^2,
\end{align}
where the last equality uses \(E_S^\top E_S=I_k\).
The second term on the right-hand side is independent of
\(q\). Thus,
\begin{align}
\underset{q\in\Delta_k}{\argmin}\,
\|z-E_Sq\|^2
&=
\underset{q\in\Delta_k}{\argmin}\,
\|E_S^\top z-q\|^2
\notag\\
&=
P_{\Delta_k}(E_S^\top z).
\end{align}
By statement~1), every point of \(C_S\) has the unique form \(E_Sq\)
with \(q\in\Delta_k\). Therefore,
\begin{align}
P_{C_S}(z)=\underset{p\in C_S}{\argmin}\,
\|z-p\|^2
=
E_S
\underset{q\in\Delta_k}{\argmin}\,
\|z-E_Sq\|^2.
\end{align}
Consequently,
we have \eqref{eq:projection_onto_candidate_face}.

Finally, since
$C_S-C_S
=
\left\{
E_S(q-r)
\ \middle|\
q,r\in\Delta_k
\right\}$,
every direction in $\mathcal L_{C_S}$ has the form $E_Sz$ with
$\bm 1^\top z=0$. Conversely, the zero-sum subspace is the linear span of
$\Delta_k-\Delta_k$, so every such $E_Sz$ belongs to
$\mathcal L_{C_S}$. This proves
\eqref{eq:reduced_feasible_direction_space}. \qed
\end{proof}

\bibliographystyle{IEEEtran}
\bibliography{references}




%




\end{document}